\theoremstyle{plain}
\newtheorem{theorem}{Theorem} 
\newtheorem{corollary}[theorem]{Corollary}
\newtheorem{lemma}[theorem]{Lemma}
\newtheorem{proposition}[theorem]{Proposition}
\theoremstyle{definition}
\newtheorem{definition}[theorem]{Definition}
\newtheorem{example}[theorem]{Example}
\newcommand{\dotcup}{\ensuremath{\mathaccent\cdot\cup}}
\newcommand\Z{{\mathbb Z}}
\newcommand\R{{\mathbb R}}
\newcommand\conv{\mathop{\rm conv}}
\newcommand\aff{\mathop{\rm aff}}
\newcommand\relint{\mathop{\rm relint}}
\begin{document}

\title{Inscribable stacked polytopes}

\author{Bernd Gonska \\
FU Berlin, Inst.\ Mathematics\\
Arnimallee 2, 14195 Berlin, Germany\\
\url{gonska@math.fu-berlin.de}
\and 
\setcounter{footnote}{0}G\"unter M. Ziegler%
\thanks{Supported the European Research
Council under the European Union's Seventh Framework Programme (FP7/2007-2013)/\allowbreak
ERC Grant agreement no.~247029-SDModels and by the DFG Research Center
\textsc{Matheon} “Mathematics for Key Technologies” in Berlin} \\
FU Berlin, Inst.\ Mathematics\\
Arnimallee 2, 14195 Berlin, Germany\\
\url{ziegler@math.fu-berlin.de}}

\date{November 22, 2011} 
\maketitle

\begin{abstract}
 We characterize the combinatorial types of stacked $d$-polytopes that are inscribable.
 Equivalently, we identify the triangulations of a simplex by stellar subdivisions 
 that can be realized as Delaunay triangulations.
\end{abstract}

\section{Introduction}  

\begin{quote}
\emph{How do geometric constraints restrict the combinatorics of polytopes?}
\end{quote}
One instance of this question asks for the combinatorial types of $d$-polytopes
that are \emph{inscribable}, that is, that have realizations with all vertices
on a sphere. This question was raised in 1832 by 
Jacob Steiner \cite{Steiner}, who asked whether all $3$-dimensional polytopes are inscribable.
The answer was given by Ernst Steinitz \cite{Steinitz} nearly 100 years later:
No --- despite a claim to the contrary by Brückner \cite[p.~163]{Bruck} this is not even
true for all simplicial types.
Indeed, the polytope obtained by “stacking” a new vertex onto each facet
of a tetrahedron is not inscribable (compare Grünbaum \cite[Sect.~13.5]{Gruenbaum}).

General $3$-polytopes are inscribable if any only if a certain associated linear program is feasible:
This was proved by Hodgson, Rivin \& Smith \cite{Rivin} using hyperbolic geometry; a complete combinatorial characterization 
for inscribability of $3$-polytopes is not available up to now (compare Dillencourt \& Smith~\cite{Dill96}).

In this paper we embark on a study of inscribability for $d$-dimensional convex polytopes.
One instance our motivating question is 
\begin{quote}
	\emph{How does the condition of inscribability restrict the $f$-vectors of polytopes?}
\end{quote} 
In this context, we observe (Section~\ref{subsec:f-inscribable_3poly}) that all $f$-vectors of $3$-polytopes
(as characterized by Steinitz \cite{Steinitz} in 1906) also occur for inscribable polytopes.
Also, as the cyclic polytopes are inscribable, 
we get that the Upper Bound Theorem of McMullen~\cite{McM} 
is sharp for the restricted class of incribable polytopes (Section~\ref{subsec:f-inscribable_cyclic}).
Moreover, it is easy to see that there are stacked $d$-polytopes with $d+1+n$ vertices
that are inscribable ($d\ge2$, $n\ge0$), so
we find that also the Lower Bound Theorem of Barnette \cite{Barnette1} \cite{Barnette2}
is sharp for the restricted class of inscribable polytopes. 
One is thus naturally led to ask whether all $f$-vectors of convex polytopes,
or at least all $f$-vectors of simplicial convex polytopes can be obtained from 
inscribable polytopes. This will be further studied in~\cite{GonskaZiegler2}.

One can then proceed and try to characterize inscribability for some of
these classes. 
This seems out of reach for neighborly polytopes, as according to Shemer \cite{Shemer} there are huge numbers of
combinatorial types, and no combinatorial 
classification in sight.
However, as the main result of this paper we
provide a combinatorial characterization of inscribable stacked polytopes.
It refers to the dual tree of a stacked polytope, which will be formally defined in Section~\ref{subsec:stacked} below;
for now, we refer to Figure~\ref{Pic:DualTree}.

\begin{figure}[htb!] 
	\centering
    \begin{tikzpicture}[
rotate around={60:(1,1)}, 
x  = {(-1,0)},y  = {(0cm,0.5cm)},z  = {(0cm,.7cm)}, scale=1.5,rounded corners=.66pt]
	\coordinate (P1) at (0,0,5.5); 
	\coordinate (P2) at (-.5,1,4);
	\coordinate (P3) at (1.5,0,5);
	\coordinate (P4) at (2,0,4);
	\coordinate (P5) at (1,1,2);
	\coordinate (P6) at (-1,0,2);
	\coordinate (P7) at (1,-1,2);
	\coordinate (P8) at (0,0,1);
	\coordinate (V1) at ($.25*(P1)+.25*(P2)+.25*(P5)+.25*(P6)$);
	\coordinate (V2) at ($.25*(P1)+.25*(P5)+.25*(P6)+.25*(P7)$);
	\coordinate (V3) at ($.25*(P1)+.25*(P3)+.25*(P5)+.25*(P7)$);
	\coordinate (V4) at ($.25*(P3)+.25*(P4)+.25*(P5)+.25*(P7)$);
	\coordinate (V5) at ($.25*(P5)+.25*(P6)+.25*(P7)+.25*(P8)$);
	\draw [thin] (P1) -- (P2) -- (P5) -- cycle;
	\draw [thin] (P1) -- (P3) -- (P5) -- cycle;
	\draw [thin] (P2) -- (P5) -- (P6) -- cycle;
	\draw [thin] (P3) -- (P4) -- (P5) -- cycle;
	\draw [thin] (P4) -- (P5) -- (P7) -- cycle;
	\draw [thin] (P5) -- (P6) -- (P8) -- cycle;
	\draw [thin] (P5) -- (P7) -- (P8) -- cycle;
	\draw  (P1) -- (P5) -- (P6) -- cycle;
	\draw [red,ultra thick] (V2)--(V5)--(V2)--(V3)--(V4);
	\draw [blue,ultra thick] (V1)--(V2);
	\draw [fill=red,draw=red,ultra thick]
		(V2)  circle(2pt) node [anchor=south west]{$r$};
		\draw [fill=white,draw=red,ultra thick]
		(V3)  circle(2pt)
		(V4)  circle(2pt)
		(V5)  circle(2pt);
	\draw [fill=white,draw=blue,ultra thick]
		(V1)  circle(2pt) node [anchor=west]{$u$};
	\draw [very thick] (P1) -- (P2) -- (P6) -- cycle;
	\draw [very thick] (P1) -- (P3) -- (P7) -- cycle;
	\draw [very thick] (P1) -- (P6) -- (P7) -- cycle;
	\draw [very thick] (P3) -- (P4) -- (P7) -- cycle;
	\draw [very thick] (P6) -- (P7) -- (P8) -- cycle;
    \end{tikzpicture}
	\caption{The dual tree of a stacked $3$-polytope.}
	\label{Pic:DualTree}
\end{figure}
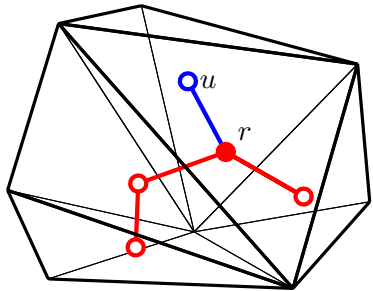

\begin{theorem}\label{mainthm:polytopes}
	A stacked polytope is inscribable if and only if all nodes of its dual tree have degree at most~$3$.
\end{theorem}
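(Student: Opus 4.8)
\emph{Strategy.} I prove the two implications separately; throughout I use two elementary observations. \emph{Leaf pruning}: deleting the apex vertex of a leaf of the dual tree of a stacked $d$-polytope $P$ gives a stacked $d$-polytope $P^-$ whose dual tree is that of $P$ with the leaf removed, and if $P$ is inscribed in a sphere then so is $P^-$ (its vertices form a subset of those of $P$). \emph{Minimal bad polytope}: by repeatedly pruning leaves that are neither a fixed node $\sigma$ of degree $\ge 4$ nor four fixed neighbours of it — which is always possible and never raises any degree — one reduces any inscribable stacked polytope with a node of degree $\ge 4$ to an inscribable one whose dual tree is a star with centre $\sigma$ of degree exactly four; combinatorially this is $\Delta^d$ with a vertex stacked onto four of its facets.

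\emph{Necessity.} By the reduction above it suffices to show that the polytope $P_0$ obtained from $\Delta^d$ by stacking onto four facets is not inscribable. For $d=3$, $P_0$ is the triakis tetrahedron $\conv\{a,b,c,d,s,p,q,r\}$ with $s,p,q,r$ stacked onto $abc,bcd,acd,abd$; assume it is inscribed in $S^2$ and project stereographically from the vertex $d$. Classically the six facets of $\partial P_0$ missing $d$ then become the Delaunay triangulation in $\mathbb R^2$ of the seven images $a',b',c',s',p',q',r'$: the triangle $a'b'c'$ split by the interior vertex $s'$, with ears of apices $r'$ across $a'b'$, $p'$ across $b'c'$, $q'$ across $a'c'$; its boundary is the convex hexagon $a'r'b'p'c'q'$. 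The empty-circumcircle condition across the three sides of $a'b'c'$ gives
\[
\bigl(\angle a's'b'+\angle b's'c'+\angle a's'c'\bigr)+\bigl(\angle a'r'b'+\angle b'p'c'+\angle a'q'c'\bigr)\le 3\pi ,
\]
and the first bracket equals $2\pi$ (the full angle at the interior vertex $s'$), so the hexagon angles at $r',p',q'$ sum to at most $\pi$; then the hexagon angles at $a',b',c'$ sum to at least $3\pi$, which is impossible since every interior angle of a convex hexagon is strictly less than $\pi$. For $d\ge 4$ one projects from a vertex $x$ of $\Delta^d$ whose opposite facet is among the four stacked ones, obtains a Delaunay triangulation in $\mathbb R^{d-1}$ with the analogous ``one central simplex, one interior stacked vertex, three ears'' pattern, and runs the same count with solid angles replacing planar angles (alternatively, one descends to $d=3$ by passing to suitable vertex figures).

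\emph{Sufficiency.} Induct on the number of stackings, the base case $\Delta^d$ being the regular inscribed simplex. In the inductive step $P$ arises from a stacked polytope $P^-$ — of maximum dual-tree degree $\le 3$, hence inscribable by induction — by stacking onto a facet $F$ of a simplex $\mu$ of the canonical triangulation, where in $P$ the node $\mu$ has at most three children in the dual tree if $\mu$ is the root and at most two otherwise (this is exactly the degree bound). As the necessity part shows, an arbitrary inscribed realization of $P^-$ need not admit this stacking, so I strengthen the inductive claim: $P^-$ has an inscribed realization for which every facet $G$ of $P^-$ carrying a prescribed later stacking comes with a nonempty open cap $C_G\subset S^{d-1}$ whose points all lie beyond $G$ and beneath every other facet of $P^-$, with the caps $C_G$ pairwise disjoint. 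One performs the stacking on $F$ by placing the new vertex inside $C_F$, flat enough that the facets of the new simplex stay close to $\aff(F)$; the caps $C_G$ for $G\ne F$ then survive, and beyond each of the (at most three) new facets carrying a prescribed stacking one carves out a fresh, sufficiently small cap, beneath everything else and disjoint from all the others. The fact that three such caps can always be reserved, while four in general cannot, is the geometric counterpart of the necessity argument.

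\emph{Main obstacle.} The crux is the sufficiency direction: one must pin down the exact strengthened invariant — how deep the new apex may sit in $C_F$, how small the reserved caps must be, and why a sufficiently flat new simplex leaves all other caps untouched — and then check that it is genuinely reproduced at each stacking; the highly symmetric realizations, which already fail to extend even for ``$\Delta^3$ with three facets stacked'', show that the naive choices do not work. In the necessity direction the only non-formal step beyond $d=3$ is the higher-dimensional bookkeeping just indicated, after which everything collapses to the classical non-inscribability of the triakis tetrahedron.
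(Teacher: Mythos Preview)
Your leaf-pruning reduction is fine, and your necessity argument for $d=3$ is correct and pleasant; it is genuinely different from the paper's. You project from an \emph{original} vertex of the tetrahedron, obtaining a convex hexagon with an interior stellar vertex, and run an angle count on the hexagon; the paper instead projects from a \emph{stacked} vertex, obtaining the twice-subdivided triangle of Lemma~\ref{lemma:doublestellartriangle}, and runs a different angle count. Both work.

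The real gap is necessity for $d\ge 4$. There is no solid-angle analogue of the planar ``opposite angles sum to less than~$\pi$'' criterion for a locally Delaunay ridge, so ``the same count with solid angles replacing planar angles'' is not an argument; and the vertex-figure alternative is unjustified, since an inscribed realization of $P_0$ does not automatically yield an inscribed realization of a vertex figure, nor is that vertex figure a lower-dimensional instance of the same problem. The paper's route here (Lemmas~\ref{lemma:1}--\ref{lemma:2} and Proposition~\ref{prop:3impossible}) is substantially more delicate and is precisely the idea you are missing: one first constructs a specific point~$x$ lying \emph{on} the circumspheres of the unsubdivided cells $F_4,\dots,F_d$ but \emph{outside} all other circumspheres of~$\mathcal T$, then applies an inversion in a sphere centred at~$x$; this sends those $d-3$ circumspheres to hyperplanes and thereby forces the images $c',v_1',v_2',v_3'$ into a common $2$-plane~$K$, after which a barycentric projection to~$K$ reduces the claim to the planar Lemma~\ref{lemma:doublestellartriangle}. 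Your sufficiency sketch, as you yourself say, is not a proof; here too the paper avoids the cap-bookkeeping entirely with an explicit one-step construction (Proposition~\ref{prop:2possible}): the two new subdivision points are placed on the line through~$c$ that is tangent at~$c$ to the circumspheres of all the \emph{non}-subdivided cells $F_3,\dots,F_d$, one on each side of~$c$ and sufficiently close.
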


Thus the requirement of inscribability does not restrict the possible $f$-vectors of stacked polytopes.
However, other combinatorial parameters are restricted. For example, 
in any inscribable stacked $d$-polytope (other than a simplex, $d\ge3$) 
less than half of the vertices are simple,
while for general stacked $d$-polytopes roughly $\frac{d-1}d$ of the vertices can be simple. 
\smallskip

The study of inscribable convex $d$-polytopes is, via stereographic projection, 
equivalent to the study of $(d-1)$-dimensional Delaunay triangulations. 
(The importance of stereographic projection in this context was
stressed in 1979 by Brown~\cite{Brown}.) 
Under this correspondence (which is detailed in Section~\ref{subsec:stereographic}),
the stacked $d$-polytopes with $d+1+n$ vertices correspond to
the Delaunay triangulations of a $(d-1)$-simplex generated by a sequence of $n$ 
stellar subdivisions of $(d-1)$-faces.
(The rooted tree of a multiple stellar subdivision of a simplex 
is discussed in Section~\ref{subsec:Delaunay} below;
for now, we refer to Figure~\ref{Pic:DualTree2}.)

\begin{figure}[htb!] 
	\centering
    \begin{tikzpicture}[x  = {(1cm,0cm)},y  = {(0cm,1cm)}, scale=1.3,rounded corners=.66pt]
	\coordinate (P1) at (0,0); 
	\coordinate (P2) at (5,0);
	\coordinate (P3) at (2.5,3.75);
	\coordinate (V0) at ($.33*(P1)+.33*(P2)+.33*(P3)$); 
	\coordinate (V2) at ($.33*(V0)+.36*(P1)+.27*(P3)$);
	\coordinate (V21) at ($.33*(V0)+.36*(V2)+.27*(P3)$);
	\coordinate (V3) at ($.3*(V0)+.4*(P3)+.3*(P2)$);
	\draw [ultra thick] (P1) -- (P2) -- (P3) -- cycle;
	\draw [thick] (P1) -- (V0) -- (P2);
	\draw [thick] (P1) -- (V0) -- (P3);
	\draw [thick] (P2) -- (V0) -- (P3);
	\draw [thin] (P3) -- (V3) -- (V0);
	\draw [thin] (P2) -- (V3) -- (V0);
	\draw [thin] (P3) -- (V3) -- (P2);
	\draw [thin] (P1) -- (V2) -- (V0);
	\draw [thin] (P3) -- (V2) -- (V0);
	\draw [thin] (P1) -- (V2) -- (P3);
	\draw [thin] (V2) -- (V21) -- (V0);
	\draw [thin] (P3) -- (V21) -- (V0);
	\draw [thin] (V2) -- (V21) -- (P3);
	\draw [ultra thick,red] (V3) -- (V0) -- (V2) -- (V21);
	\draw [fill=red,draw=red,ultra thick]
		(V0)  circle(2pt) 
		+(0,-0.04) node [anchor=north] {$r$};
	\draw [fill=white,draw=red,ultra thick]
		(V3)  circle(2pt)
		(V2)  circle(2pt)
		(V21)  circle(2pt);
		\coordinate (TU) at (8,3.5); 
		\coordinate (T0) at (8,2.5); 
		\coordinate (T3) at (7,1.5);
		\coordinate (T2) at (9,1.5);
		\coordinate (T21) at (9,.5); 
	\draw [ultra thick,blue, dotted] (TU) -- (T0);
	\draw [ultra thick,red] (T3) -- (T0) -- (T2) -- (T21);
		\draw [fill=blue,draw=blue,ultra thick]
			(TU)  circle(2pt) node [anchor=south west] {$u$};
		\draw [fill=red,draw=red,ultra thick]
			(T0)  circle(2pt) node [anchor=south west] {$r$};
		\draw [fill=white,draw=red,ultra thick]
			(T3)  circle(2pt)
			(T2)  circle(2pt)
			(T21)  circle(2pt);
    \end{tikzpicture}
	\caption{A stellar subdivision of a simplex and its dual rooted tree.}
	\label{Pic:DualTree2}
\end{figure}
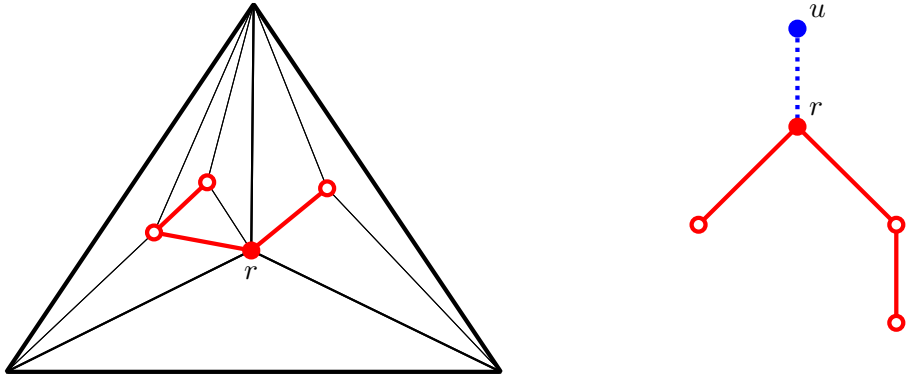

\begin{theorem}\label{mainthm:Delaunay} 
	A triangulation that is a multiple stellar subdivision of a $(d-1)$-simplex 
	can be realized as a Delaunay triangulation if and only if
	at most two of the $(d-1)$-simplices generated in any single stellar subdivision are further subdivided. 
\end{theorem}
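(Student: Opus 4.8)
The plan is to read Theorem~\ref{mainthm:Delaunay} off Theorem~\ref{mainthm:polytopes} through the stereographic-projection dictionary set up in Sections~\ref{subsec:stereographic} and~\ref{subsec:Delaunay}: the two statements are really the same theorem seen in two pictures. Given an inscribed realization of a stacked $d$-polytope $P$ with $d+1+n$ vertices, I would pick a \emph{simple} vertex $v^\ast$ of $P$ and project stereographically from it. A natural choice is the apex of the $d$-simplex attached to a leaf of the dual tree of $P$ (for $n\ge 1$ such a leaf exists, and its apex is never stacked upon, hence simple; the case $n=0$ is trivial on both sides). Because $v^\ast$ lies on exactly $d$ facets, those $d$ facets become the unbounded cells, the facet $G$ of the leaf simplex not containing $v^\ast$ maps to a bounded $(d-1)$-simplex $\Delta$, and the remaining $1+n(d-1)$ facets of $P$ form a Delaunay triangulation of $\Delta$; the $n$ vertices of $P$ other than $v^\ast$ and the $d$ vertices of $G$ land in the interior of $\Delta$, and following the order in which they were stacked exhibits this Delaunay triangulation as a sequence of $n$ stellar subdivisions of $(d-1)$-simplices. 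Conversely, the inverse lifting shows that every Delaunay-realizable multiple stellar subdivision of a $(d-1)$-simplex arises from an inscribed stacked polytope in this way. This yields a combinatorial bijection between inscribable stacked $d$-polytopes with $d+1+n$ vertices and Delaunay-realizable multiple stellar subdivisions of a $(d-1)$-simplex using $n$ subdivisions.

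The key step is then to check that this bijection carries the dual tree of $P$ to the rooted dual tree of the stellar subdivision, as abstract trees (forgetting the root). Under the dictionary each $d$-simplex $\sigma$ of the stacked decomposition of $P$ --- equivalently, each node of the dual tree --- corresponds either to the ``virtual root'' $u$ (the node attached to the leaf we projected from) or to a single stellar subdivision, say the one inserting a point $q$ inside a cell $c$. That subdivision creates exactly $d$ new $(d-1)$-simplices, and the children of the corresponding node are precisely the subdivisions later performed inside those $d$ cells. Hence, apart from $u$ (which always has degree~$1$), every node has degree $1+k$, where $k$ is the number of the $d$ cells created by its subdivision that are further subdivided. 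So ``all nodes of the dual tree of $P$ have degree at most~$3$'' says exactly ``for every stellar subdivision at most two of the $(d-1)$-simplices it generates are further subdivided'', and Theorem~\ref{mainthm:Delaunay} follows by applying Theorem~\ref{mainthm:polytopes} in both directions.

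The real work, and the place I expect difficulty, is not this combinatorial translation but verifying that the correspondence is faithful with no parasitic degeneracies: that one can always project from a simple vertex so that $G$ maps to an honest bounded $(d-1)$-simplex with all $n$ remaining vertices strictly interior; that, under stereographic projection, ``being a facet of the convex hull $P$'' translates into the empty-circumsphere condition, so that the facets of $P$ not through $v^\ast$ match the Delaunay cells of $\Delta$ and ``Delaunay'' and ``inscribed'' genuinely coincide; and that the resulting rooted tree is independent of the chosen leaf except for the position of the root $u$ --- which is harmless, since $u$ has degree~$1$ and the condition in Theorem~\ref{mainthm:Delaunay} is symmetric in all nodes. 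Once Sections~\ref{subsec:stereographic} and~\ref{subsec:Delaunay} supply these facts, Theorem~\ref{mainthm:Delaunay} is an immediate corollary of Theorem~\ref{mainthm:polytopes}.
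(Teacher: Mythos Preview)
Your translation between Theorems~\ref{mainthm:polytopes} and~\ref{mainthm:Delaunay} via stereographic projection is correct, and the paper itself records exactly this equivalence as a corollary of Proposition~\ref{prop:Stereographic_simplicial}. Your identification of the dual tree of $P$ with the rooted tree of the stellar subdivision plus the extra leaf $u$ also matches the paper's Figures~\ref{Pic:DualTree} and~\ref{Pic:DualTree2}.

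The problem is circularity. In this paper Theorem~\ref{mainthm:polytopes} has no independent proof: the paper proves Theorem~\ref{mainthm:Delaunay} directly and then \emph{derives} Theorem~\ref{mainthm:polytopes} from it through the very dictionary you are invoking. So ``apply Theorem~\ref{mainthm:polytopes} in both directions'' is not available as a step toward Theorem~\ref{mainthm:Delaunay}; you have reduced the theorem to itself. What your write-up is missing is precisely the geometric work that the paper does in Sections~\ref{sec:sufficiency} and~\ref{sec:necessity}.

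For the ``if'' direction the paper gives a constructive argument (Proposition~\ref{prop:2possible}): given an interior vertex $c$ of degree $d$ in a Delaunay triangulation, one can stellarly subdivide any two of the $d$ cells at $c$ and remain Delaunay, by placing the two new points $x_1,x_2$ on the line through $c$ tangent to the circumspheres of the $d-2$ untouched cells. The local Delaunay conditions are then checked case by case. For the ``only if'' direction (Proposition~\ref{prop:3impossible}), the paper first handles $d=3$ by the elementary angle argument of Lemma~\ref{lemma:doublestellartriangle}, and for $d>3$ constructs an auxiliary point $x$ lying on the circumspheres of the unsubdivided cells $F_4,\dots,F_d$ but outside all others (Lemmas~\ref{lemma:1} and~\ref{lemma:2}); inversion in a sphere centred at $x$ flattens those $d-3$ circumspheres to hyperplanes, after which a barycentric projection to a $2$-plane reduces the obstruction to the $d=3$ case. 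None of this is visible from the stereographic-projection dictionary alone.
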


\section{Stacked polytopes and Delaunay triangulations}\label{sec:}

\subsection{Inscribable polytopes}\label{subsec:inscribable}

\begin{definition}[inscribed polytope]
	A convex $d$-polytope is \emph{inscribed} if its vertices lie on a $(d-1)$-sphere.
	It is \emph{inscribable} if it is combinatorially equivalent to an inscribed
	polytope, that is, if it has a realization that is inscribed.
\end{definition}

\subsection{Stacked polytopes}\label{subsec:stacked}
 
\begin{definition}[stacked polytope]\label{def:stacked}
	A polytope is \emph{stacked} if it can be built from a $d$-simplex by a sequence of {stacking operations}:
	A \emph{stacking operation} is performed onto a facet
	by taking the convex hull of the polytope with a new point that lies beyond the selected facet but beneath 
	all other facets of the polytope.
\end{definition} 
  
  A stacking operation can also be imagined as gluing a simplex onto a facet.
  A simplicial $d$-polytope $P$ is stacked if and only if it has a triangulation  
  with only interior faces of dimension $d$ and $d-1$. In dimension at least $3$ such a triangulation 
  is unique if it exists: Its simplices are given by the cliques (complete subgraphs)
  of the graph of~$P$. 

The “claim to fame” of stacked polytopes is the Lower Bound Theorem \cite{Barnette1} \cite{Barnette2} \cite{Bronsted}: Among all simplicial
$d$-polytopes with $d+1+n$ vertices, the stacked polytopes have the minimal number of facets
(and indeed, the minimal number of $k$-faces, for all $k$).
Moreover, for $d\ge4$ the stacked polytopes are the only polytopes with these parameters.

\begin{definition}[dual tree of a stacked polytope]\label{def:dualtree_polytope}
	For $d\ge3$, the \emph{dual tree} $T_P$ of a stacked $d$-polytope $P$ is the dual graph of  
	its triangulation that has only interior $d$- and $(d-1)$-faces:
	Every $d$-face in the triangulation corresponds to a node and every interior $(d-1)$-face
	corresponds to an edge of the tree.
\end{definition}

The graph $T_P$ given by Definition~\ref{def:dualtree_polytope} 
is indeed a tree if $P$ is stacked.

We choose any node of $T_P$ as a root and assign an order 
to the rest of the nodes such that a child is always greater than its parent.
Any such order implies an iterative construction of $P$ via stackings in the following way:
The root represents the initial simplex. Every child has one vertex that it does not share
with its parent. This is used to stack the $(d-1)$-face that child and parent share.
Assuming that $T_P$ has at least two nodes, we see that the leaves of the tree are responsible
for the simple vertices of $P$, but no further simple vertices are possible,
except if the root has exactly one child, then there is an additional simple vertex 
that only the root $d$-simplex contains.

Clearly the dual tree $T_P$ of a stacked $d$-polytope on $d+1+n$ vertices has maximal degree
\[
\max\deg T_P\le \min\{d+1,n\}
\]
and stacked polytopes with these parameters exist for all $d\ge3$ and $n\ge0$.

\subsection{Delaunay triangulations}\label{subsec:Delaunay}

For any affinely spanning finite set of points $V\subset\R^{d-1}$, the 
\emph{Delaunay subdivision} $\mathcal D(V)$
is the unique subdivision of $\conv(V)$ into inscribed $(d-1)$-polytopes 
$P_i=\conv(V_i)$, $V_i\subset V$, that are given by the \emph{empty circumsphere condition}:
There exists a $(d-2)$-sphere that passes through all vertices of $P_i$
but all other vertices of $D(V)$ lie outside this sphere.
If the points in $V$ are in sufficiently general position (which is satisfied in particular
if no $d+1$ points lie on a sphere), then the Delaunay subdivision is indeed a triangulation,
known as \emph{the Delaunay triangulation} of~$V$.

One way to construct the Delaunay subdivision/triangulation is to 
derive it from an inscribed $d$-polytope by using the (inverse) stereographic 
projection, as discussed below.

We will employ the following very elegant criterion in order to test whether a given triangulation is 
the Delaunay triangulation. For this we call a face of a triangulation \emph{Delaunay}
if there exists a \emph{supporting sphere} of the face, that is,
a $(d-2)$-sphere passing through the vertices of the face such that all 
other vertices of the triangulation lie outside the sphere. 
Each interior $(d-2)$-face $F$ is contained in exactly two $(d-1)$-faces,
$\conv(F\cup\{v_1\})$ and $\conv(F\cup\{v_2\})$; we call it \emph{locally Delaunay} if 
there exists a $(d-2)$-sphere passing through the vertices of $F$ 
such that the vertices $v_1$ and $v_2$ lie outside this sphere.

\begin{lemma}[Delaunay Lemma]\label{lemma:localDelaunay}
	Let $V\subset\R^{d-1}$ be a finite, affinely spanning set of points.
	A triangulation $\mathcal T$ of $\conv(V)$ with vertex set~$V$ 
	is the Delaunay triangulation if and only if one of the following equivalent 
	statements hold:
\begin{compactenum}[\rm(1)]
	\item All $(d-1)$-faces of $\mathcal T$ are {Delaunay}.
	\item All faces of $\mathcal T$ are {Delaunay}.
	\item All $(d-2)$-faces of $\mathcal T$ are {Delaunay}.
	\item All interior $(d-2)$-faces of $\mathcal T$ are {locally Delaunay}.
\end{compactenum}

\end{lemma}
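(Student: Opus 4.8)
The plan is to pass to the standard \emph{paraboloid lift} $\ell\colon\R^{d-1}\to\R^{d}$, $\ell(x)=(x,\lVert x\rVert^{2})$ (the paraboloid model of the stereographic lift used in later sections; compare Brown~\cite{Brown}). Its defining feature is that a $(d-2)$-sphere $S\subset\R^{d-1}$ is carried to the intersection of the paraboloid with an affine hyperplane $\widehat S\subset\R^{d}$ in such a way that $v$ lies strictly inside $S$ exactly when $\ell(v)$ lies strictly below $\widehat S$. Hence the Delaunay subdivision of $V$ is the projection to $\R^{d-1}$ of the lower faces of $\conv(\ell(V))$, every point of $V$ is a vertex of it (since $x\mapsto\lVert x-v\rVert^{2}$ has its unique minimum over $V$ at $v$), and lifting the given triangulation $\mathcal T$ vertex by vertex and interpolating linearly on each simplex yields a piecewise-linear function $f_{\mathcal T}\colon\conv(V)\to\R$ for which the whole lemma collapses to a single statement: \emph{$\mathcal T$ is the Delaunay triangulation if and only if $f_{\mathcal T}$ is convex.} (If $f_{\mathcal T}$ is convex then each of its extended affine pieces $h_{\sigma}$ supports $\conv(\ell(V))$ from below, so the graph of $f_{\mathcal T}$ is exactly the union of the lower faces.)

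With this dictionary the easy implications fall out. If $\mathcal T$ is the Delaunay triangulation, every face $G$ of $\mathcal T$ lifts to a face of $\conv(\ell(V))$, hence is cut out by a non-vertical supporting hyperplane meeting the polytope in exactly $\ell(G)$; reading this back gives a sphere through the vertices of $G$ with every other vertex strictly outside, which is~(2), and then (2)$\Rightarrow$(1) and (2)$\Rightarrow$(3) are immediate. Also (3)$\Rightarrow$(4) is immediate: a supporting sphere of an interior ridge has all remaining vertices outside, in particular the two apices. And (1)$\Rightarrow$(4): if $F=\sigma_{1}\cap\sigma_{2}$ is an interior ridge with $\sigma_{1}$ Delaunay, then in the lift $\ell(v_{2})$ lies strictly above the hyperplane through $\ell(\sigma_{1})$; tilting that hyperplane slightly within the pencil of hyperplanes through $\aff(\ell(F))$ lifts off $\ell(v_{1})$ while keeping $\ell(v_{2})$ above, so $F$ is locally Delaunay.

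The substance is the implication (4)$\Rightarrow$[$\mathcal T$ is Delaunay]. First I would restate the hypothesis: for an interior ridge $F=\sigma_{1}\cap\sigma_{2}$ with apices $v_{1},v_{2}$ and $h_{i}$ the affine function agreeing with $f_{\mathcal T}$ on $\sigma_{i}$, the difference $h_{2}-h_{1}$ equals $c_{F}\lambda_{F}$ for the affine function $\lambda_{F}$ vanishing on $\aff(F)$ and positive on the $\sigma_{2}$-side; a short computation (the reciprocity ``$v_{2}$ lies inside the circumsphere of $\sigma_{1}$'' $\Leftrightarrow$ ``$v_{1}$ lies inside the circumsphere of $\sigma_{2}$'') shows $F$ is locally Delaunay if and only if $c_{F}>0$, that is, $f_{\mathcal T}$ bends strictly \emph{upward} across~$F$. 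So it remains to prove the local-to-global principle: \emph{a piecewise-linear function on the convex domain $\conv(V)$ that bends upward across every interior ridge of its defining triangulation is convex.} Assume not; then some chord of the graph of $f_{\mathcal T}$ dips strictly below it, so there are $a,b\in\conv(V)$ with $\varphi(s):=f_{\mathcal T}((1-s)a+sb)-(1-s)f_{\mathcal T}(a)-sf_{\mathcal T}(b)$ positive somewhere on $(0,1)$ while $\varphi(0)=\varphi(1)=0$. Perturbing $a,b$ generically (an open condition, so $\varphi$ stays positive somewhere) I may assume $[a,b]$ meets no face of $\mathcal T$ of dimension $\le d-3$ and crosses each ridge it meets transversally in the relative interior. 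Then $\varphi$ is piecewise linear on $[0,1]$ and attains its positive maximum at a breakpoint $s^{\ast}$ where it has a strictly concave kink; that breakpoint is a transversal crossing of some interior ridge $F$, and a concave kink there forces $c_{F}<0$, contradicting $c_{F}>0$. Hence $\varphi\le0$ for all generic, and so all, $a,b$; thus $f_{\mathcal T}$ is convex and $\mathcal T$ is the Delaunay triangulation.

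The hardest step will be this last local-to-global argument: convexity of a piecewise-linear function is not a purely local condition (it genuinely uses convexity of the domain $\conv(V)$), and the way to cash in that convexity is precisely the genericity reduction to the one-dimensional segment $[a,b]$ together with the bookkeeping that identifies ``$F$ locally Delaunay'' with the sign of~$c_{F}$. I would also note that the statement tacitly assumes $V$ in sufficiently general position for ``the Delaunay triangulation'' to exist: if $d+1$ points of $V$ are cospherical the argument still produces a convex $f_{\mathcal T}$, but the Delaunay \emph{subdivision} need not then be a triangulation, so one reads the lemma with that caveat (or restricts to the generic case throughout).
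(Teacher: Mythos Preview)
Your argument is correct and considerably more complete than what the paper itself provides. The paper handles the easy chain $(1)\Rightarrow(2)\Rightarrow(3)\Rightarrow(4)$ by a direct sphere argument (perturb the circumsphere of a containing $(d-1)$-face slightly to obtain a supporting sphere for any given subface), and for the substantive implication $(4)\Rightarrow(1)$ it simply cites Edelsbrunner's book. You instead pass uniformly through the paraboloid lift and actually supply the local-to-global convexity argument via the one-dimensional reduction; this is the standard route and essentially what the cited reference contains, so the two approaches are not really in tension---you have written out what the paper outsources. One minor remark: your closing caveat about general position is slightly over-cautious. The strict inequalities built into condition~(4) (your $c_F>0$) force the lifted surface to crease strictly at every interior ridge, so once $f_{\mathcal T}$ is shown to be convex it coincides with the lower-hull function on the nose and the Delaunay subdivision is automatically the triangulation~$\mathcal T$; no separate genericity hypothesis on $V$ is needed.
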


\begin{proof}
 The first statement is the definition of a Delaunay triangulation. It implies 
 the second statement: For each face $F$ one can always do a slight change 
 to the supporting sphere of a $(d-1)$-face that contains $F$ to derive a supporting sphere of $F$.
 The second statement implies the third and this in turn the last one. 
 For more details and also for a proof that the last statement implies the first, 
 we refer to Edelsbrunner~\cite[pp.~7 and 99]{Edelsbrunner}.
\end{proof}

\begin{definition}[stellar subdivision]
	Let $p$ be a point inside a full-dimensional simplex $\sigma$ of a triangulation $\mathcal T$.
	A \emph{single stellar subdivision} of $\mathcal T$ at $\sigma$ (by $p$) is the triangulation that replaces
	$\sigma$ by the simplices spanned by $p$ and a proper face of $\sigma$.
	We call a triangulation a \emph{multiple stellar subdivision} of $\mathcal T$ at $\sigma$  
	if one or more single stellar subdivisions have been applied.  
\end{definition}

In the following, we will discuss which Delaunay triangulations 
can be generated by multiple stellar subdivisions of a triangulation that has just 
one full-dimensional simplex.  

\begin{lemma}\label{lem:stellar}
	A single stellar subdivision of the triangulation that has just one full-dimensional simplex
	is always a Delaunay triangulation.	 
\end{lemma}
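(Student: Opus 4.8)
The plan is to apply the Delaunay Lemma (Lemma~\ref{lemma:localDelaunay}) in its simplest form, criterion~(1). Write the given $(d-1)$-simplex as $\sigma=\conv\{v_0,\dots,v_{d-1}\}$ and let $p\in\relint\sigma$ be the point of the subdivision, so the resulting triangulation $\mathcal T$ has vertex set $V=\{v_0,\dots,v_{d-1},p\}$ and its $(d-1)$-faces are precisely the cones $\tau_i:=\conv\bigl(\{p\}\cup\{v_k:k\neq i\}\bigr)$ from $p$ over the facets $F_i:=\conv\{v_k:k\neq i\}$ of $\sigma$, for $i=0,\dots,d-1$. No $d+1$ of the points of $V$ are cospherical, because $p$ lies strictly inside the circumsphere of $\sigma$; so $\mathcal T$ is a triangulation and the Delaunay triangulation of $V$ is well defined. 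By criterion~(1) it is enough to show that each $\tau_i$ is Delaunay, and since $v_i$ is the one vertex of $\mathcal T$ not lying on $\tau_i$, this amounts to finding a $(d-2)$-sphere through the $d$ points $\{p\}\cup\{v_k:k\neq i\}$ that has $v_i$ strictly in its exterior.

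These $d$ points are affinely independent — the $v_k$ with $k\neq i$ span the facet hyperplane $\aff F_i$, while $p\notin\aff F_i$ because $p$ is interior to $\sigma$ — so they have a unique circumsphere $\Sigma_i$, and the lemma reduces to the single claim that $v_i$ lies strictly outside $\Sigma_i$. I would prove this with power functions. For a sphere $\Sigma$ with center $c$ and radius $r$, set $\pi_\Sigma(x)=\|x-c\|^2-r^2$; then $x$ is exterior to $\Sigma$ precisely when $\pi_\Sigma(x)>0$, and the difference of two such functions is affine since the quadratic terms cancel. Let $S$ be the circumsphere of $\sigma$. Then $\pi_S-\pi_{\Sigma_i}$ is an affine function vanishing at every $v_k$ with $k\neq i$, hence on $\aff F_i$, so $\pi_S-\pi_{\Sigma_i}=t\,\lambda_i$ for some scalar $t$, where $\lambda_i$ is a fixed affine form with zero set $\aff F_i$.

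Now read off signs. Since $p\in\relint\sigma$ it lies strictly inside $S$, so $\pi_S(p)<0$, while $\pi_{\Sigma_i}(p)=0$; hence $t\,\lambda_i(p)=\pi_S(p)<0$. On the other hand $\pi_S(v_i)=0$, so $-\pi_{\Sigma_i}(v_i)=t\,\lambda_i(v_i)$. The hyperplane $\aff F_i$ supports $\sigma$ with $v_i$ and all of $\relint\sigma$ (in particular $p$) on its open side, so $\lambda_i(v_i)$ and $\lambda_i(p)$ are nonzero of equal sign; therefore $t\,\lambda_i(v_i)$ has the same, negative, sign as $t\,\lambda_i(p)$. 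Thus $\pi_{\Sigma_i}(v_i)>0$, i.e.\ $v_i$ lies strictly outside $\Sigma_i$, as required. Equivalently, one may argue on the paraboloid lift $x\mapsto(x,\|x\|^2)$: $\mathcal T$ is Delaunay because the lift of $p$ lies below the hyperplane spanned by the lifted vertices of $\sigma$, while the lift of $v_i$ lies above the hyperplane spanned by the lift of $p$ together with the lifts of the $v_k$, $k\neq i$.

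The only real content is this last sign check — geometrically, the fact that inflating the circumsphere of the facet $F_i$ until it passes through the interior point $p$ leaves the opposite vertex $v_i$ outside. Its mechanism, and likewise the reason that $\pi_S(p)<0$, is the strict convexity of $x\mapsto\|x\|^2$ combined with $p$ and $v_i$ lying on the same side of $\aff F_i$. Everything else — describing the cells of a stellar subdivision, the affine independence of $\{p\}\cup F_i$, and the reduction to criterion~(1) — is routine.
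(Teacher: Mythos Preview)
Your proof is correct, but it takes a different route from the paper's. The paper argues in one line by contradiction and convexity: if the circumsphere $\Sigma_i$ of the new cell $\tau_i$ contained the missing vertex $v_i$, then all vertices of the original simplex $\sigma$ would lie in the closed ball bounded by $\Sigma_i$, hence so would all of $\sigma$; in particular the interior subdivision point $p$ would lie strictly inside $\Sigma_i$, contradicting $p\in\Sigma_i$. Your argument instead computes with power functions (equivalently, the paraboloid lift), comparing $\pi_S$ and $\pi_{\Sigma_i}$ along the affine form cutting out $\aff F_i$ and reading off the sign at $v_i$ from the sign at $p$. Both are short and valid; the paper's version is a pure convexity observation with no computation, while yours is the standard algebraic mechanism behind the lifting characterization of Delaunay triangulations and would generalize more mechanically to variants where one wants quantitative control.
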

		\begin{figure}[htb!]
			\centering	
		    \begin{tikzpicture}[scale=.7]
			\coordinate (A) at ($.8*(0,0)$);
			\coordinate (B) at ($.8*(7.7,.5)$);
			\coordinate (C) at ($.8*(3.9,6)$);
			\coordinate (Z) at ($.8*(4.1,2.6)$);
			\clip[] (-.7,-.5) rectangle (7,5.5);
			\draw[style=thick] (A) -- (B) -- (C) -- cycle; 
			\draw[style=thick] (A)--(Z)--(B)--(Z)--(C); 
			\coordinate (AB) at ($(A)!.5!(B)$);
			\coordinate (AC) at ($(A)!.5!(C)$);
			\coordinate (AZ) at ($(A)!.5!(Z)$);
			\coordinate (BZ) at ($(B)!.5!(Z)$);
			\coordinate (ABx) at ($(AB)!1cm!90:(B)$);
			\coordinate (ACx) at ($(AC)!1cm!90:(C)$);
			\coordinate (AZx) at ($(AZ)!1cm!90:(Z)$);
			\coordinate (BZx) at ($(BZ)!1cm!90:(Z)$);
			\coordinate (ABZ) at (intersection of AZ--AZx and BZ--BZx);
			\coordinate (ABC) at (intersection of AC--ACx and AB--ABx);
			\node (Circ) [draw,red] at (ABZ) [circle through={(Z)}] {};
			\node (Circ2) [draw] at (ABC) [circle through={(A)}] {};
			\draw[thick,red] (C) circle(.1);
			\draw[style=thick] 
			   +(3.2,1) node {$\sigma$} 
			   +(C) node [anchor=south] {$v$}
			   +(Z) node [anchor=north] {$c$};
		    \end{tikzpicture}
	    \caption{The circumsphere of $\sigma$ cannot contain $v$.}
	    \label{fig:stellar}
	\end{figure}
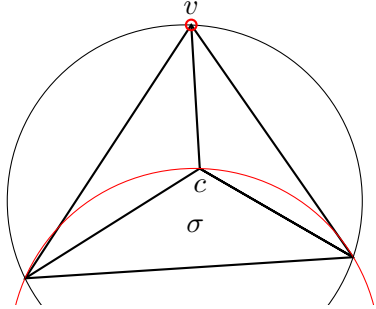

\begin{proof}
	If the circumsphere of a new full-dimensional simplex $\sigma$ would contain the vertex $v$ that does not lie in $\sigma$, 
	then it would contain all points of the original simplex, and hence it would contain the new vertex $c$ 
	in its interior. See Figure~\ref{fig:stellar}.
\end{proof}

\begin{definition}[dual tree of a stellar subdivision]\label{def:dualtree_triangulation}
	The \emph{rooted tree} $T_{\mathcal T}$ of a multiple stellar subdivision $\mathcal T$ of 
	a $(d-1)$-simplex $\sigma$, for $d\ge3$, 	has one node for every vertex that is inserted by a 
	single stellar subdivision, or equivalently for every $(d-1)$-face that it destroys.
	The root node $r$ corresponds to the (first) single stellar subdivision of $\sigma$.
	The node $v'$ is a child of the node $v$ if it corresponds to a single stellar subdivision
	of a $(d-1)$-face that was created in the single stellar subdivision corresponding to $v$. 
\end{definition}

Figure~\ref{Pic:DualTree2} shows a multiple stellar subdivision
of a $2$-simplex and the corresponding dual tree.

\begin{example}\label{example:simplex_subdivided}
Figure~\ref{Pic:LowerBound} illustrates the construction of a multiple
stellar subdivision of a $(d-1)$-simplex (generalizing Lemma~\ref{lem:stellar}) where the dual tree is a path.
\end{example}

  \begin{figure}[htb!]
      \centering
      \begin{tikzpicture}[scale=1]
      \clip[] (-.5,-.5) rectangle (7,5.5);
      \coordinate (A) at (0,0);
      \coordinate (B) at (5,0);
      \coordinate (C) at (2.5,4);
      \coordinate (Z) at (2.1,1.5);
      \coordinate (X1) at ($(C)!.2!(Z)$);
      \coordinate (X2) at ($(C)!.4!(Z)$);
      \coordinate (X3) at ($(C)!.7!(Z)$);
      \coordinate (X4) at ($(C)!.9!(Z)$);
      \coordinate (X5) at ($(C)!1.1!(Z)$);
      \coordinate (X6) at ($(C)!1.2!(Z)$);
      \coordinate (X7) at ($(C)!1.3!(Z)$);
      \coordinate (X8) at ($(C)!1.4!(Z)$);
      \draw[fill,pink] (B)--(X2)--(X3)--cycle;
      \draw[fill,pink] (A)--(B)--(X5)--cycle;
      \draw[style=thick] (A) -- (B) -- (C) -- cycle;
      \draw[style=thick] (C)--(X1)--(X2)--(X3)--(X4)--(X5);
      \draw[] (A)--(X1)--(B);
      \draw[] (A)--(X2)--(B);
      \draw[] (A)--(X3)--(B);
      \draw[] (A)--(X4)--(B);
      \draw[] (A)--(X5)--(B);
      \draw[style=dashed] ($(C)!0!(Z)$)--($(C)!1.8!(Z)$);
      \coordinate (AX5) at ($(A)!.5!(X5)$);
      \coordinate (BX5) at ($(B)!.5!(X5)$);
      \coordinate (AX5x) at ($(AX5)!1cm!90:(X5)$);
      \coordinate (BX5x) at ($(BX5)!1cm!90:(X5)$);
      \coordinate (F) at (intersection of AX5--AX5x and BX5--BX5x);
      \node (Circ) [draw,red] at (F) [circle through={(X5)}] {};
      \coordinate (X23) at ($(X2)!.5!(X3)$);
      \coordinate (BX3) at ($(B)!.5!(X3)$);
      \coordinate (X23x) at ($(X23)!1cm!90:(X3)$);
      \coordinate (BX3x) at ($(BX3)!1cm!90:(B)$);
      \coordinate (F) at (intersection of X23--X23x and BX3--BX3x);
      \node (Circ) [draw,red] at (F) [circle through={(X3)}] {};
      \draw[fill] (A) circle (2pt);
      \draw[fill] (B) circle (2pt);
      \draw[fill] (C) circle (2pt);
      \draw[fill] (X1) circle (2pt);
      \draw[fill] (X2) circle (2pt);
      \draw[fill] (X3) circle (2pt);
      \draw[fill] (X4) circle (2pt);
      \draw[fill] (X5) circle (2pt);
      \draw[] (X6) circle (2pt);
      \draw[] (X7) circle (2pt);
      \draw[] (X8) circle (2pt);
      
        \end{tikzpicture}
      \caption{Take a full dimensional simplex and a ray from a vertex to the interior of the simplex. 
           Apply $n$ single stellar subdivisions to the complex by introducing new vertices 
           on the ray. The result is a Delaunay triangulation.}
      \label{Pic:LowerBound}
  \end{figure}
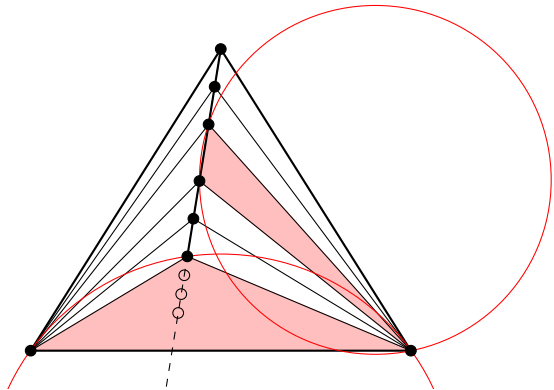

\begin{corollary}[a stellar subdivision can be undone]\label{cor:can_be_undone}
	Let the triangulation $\mathcal T'$
	be obtained from a triangulation $\mathcal T$
	of an affinely spanning point set $V\subset\R^{d-1}$ by a single stellar subdivision.
	If $\mathcal T'$ is a Delaunay triangulation,
	then so is $\mathcal T$.
\end{corollary}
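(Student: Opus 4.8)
The plan is to read the statement off the Delaunay Lemma (Lemma~\ref{lemma:localDelaunay}), using that a single stellar subdivision is combinatorially a refinement: passing from $\mathcal T$ to $\mathcal T'$ deletes only the one full-dimensional simplex $\sigma$ at which we subdivide and adds new faces, each of which contains the new vertex $p$, while the vertex set grows from $V$ to $V\cup\{p\}$. The first thing to verify is the purely combinatorial claim that \emph{every $(d-2)$-face of $\mathcal T$ is again a $(d-2)$-face of $\mathcal T'$}. Indeed, $\sigma$ has dimension $d-1$, so no $(d-2)$-face of $\mathcal T$ equals $\sigma$; a $(d-2)$-face of $\mathcal T$ not contained in $\sigma$ is a face only of simplices other than $\sigma$, which are left untouched; and a $(d-2)$-face of $\mathcal T$ contained in $\sigma$ is a facet of the simplex $\sigma$ and hence reappears as a face of one of the new simplices (the cones from $p$ over the facets of $\sigma$).

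Granting this, I would apply the Delaunay Lemma to $\mathcal T'$. Since $\mathcal T'$ is a Delaunay triangulation, statement~(3) of the lemma gives that every $(d-2)$-face of $\mathcal T'$ is Delaunay. So for each $(d-2)$-face $F$ of $\mathcal T$ — which by the previous step is a $(d-2)$-face of $\mathcal T'$ — there is a $(d-2)$-sphere $S_F$ through the vertices of $F$ with all other vertices of $\mathcal T'$, i.e.\ all points of $V\cup\{p\}$ that are not vertices of $F$, lying strictly outside $S_F$.

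The remaining step is the trivial observation that the supporting-sphere condition is monotone under shrinking the vertex set: the points of $V$ that are not vertices of $F$ form a subset of the points of $V\cup\{p\}$ that are not vertices of $F$, so the very same sphere $S_F$ certifies that $F$ is Delaunay as a face of $\mathcal T$. Hence all $(d-2)$-faces of $\mathcal T$ are Delaunay, and the Delaunay Lemma (statement~(3), applied now to $\mathcal T$) shows that $\mathcal T$ is the Delaunay triangulation of $V$.

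I do not expect a genuine obstacle; the only point deserving care is the bookkeeping in the first paragraph — in particular that a \emph{boundary} facet of $\sigma$, which momentarily loses its unique coface $\sigma$, really does come back as a face of a cone with apex $p$. As a cross-check, and an alternative more hands-on route, one could instead use statement~(4) of the lemma: an interior $(d-2)$-face of $\mathcal T$ that is not a facet of $\sigma$ keeps both of its adjacent $(d-1)$-simplices in $\mathcal T'$, so its local Delaunay condition is literally unchanged, whereas for an interior facet $F$ of $\sigma$ — with opposite vertex $v$ in $\sigma$ and opposite vertex $w$ on the other side — the locally Delaunay sphere of $F$ in $\mathcal T'$ keeps both $p$ and $w$ outside, and a one-line power-of-a-point estimate (using that $p$ is a convex combination of the vertices of $F$ and of $v$ with positive weight on $v$, together with convexity of $x\mapsto|x|^2$) upgrades ``$p$ is outside'' to ``$v$ is outside''. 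The face-based argument above avoids this computation and seems cleaner.
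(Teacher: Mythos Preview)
Your argument is correct and matches the paper's proof essentially verbatim: the paper also observes that a stellar subdivision does not destroy any $(d-2)$-face, so the supporting spheres for the $(d-2)$-faces of $\mathcal T'$ (guaranteed by the Delaunay Lemma) serve as supporting spheres for all $(d-2)$-faces of $\mathcal T$. You have simply spelled out the bookkeeping and the monotonicity of the empty-sphere condition under shrinking the vertex set more carefully than the paper does.
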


\begin{proof}
	A stellar subdivision does not destroy a $(d-2)$-face, thus 
 	among the supporting spheres for $(d-2)$-faces in $\mathcal T'$ we have the supporting spheres for 
	all $(d-2)$-faces in $\mathcal T$.
\end{proof}

We end this section with a first example of a triangulation that
\emph{cannot} be realized as a Delaunay triangulation. 
The last sentence of the following lemma yields a more precise statement
that will turn out to be crucially important later.

\begin{lemma}\label{lemma:doublestellartriangle}
	Apply a single stellar subdivision by a point $x$ to a triangle $\conv\{A,B,C\}$    
	and then single stellar subdivisions by points $a,b,c$ to the three new triangles. 

	 The resulting triangulation is not a Delaunay triangulation:
	At least one of the three edges $Ax$, $Bx$ and $Cx$ violates the locally Delaunay criterion.
\end{lemma}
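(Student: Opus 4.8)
My plan is to derive a contradiction from a global angle count, using the classical inscribed-angle form of the locally Delaunay criterion. First I would fix notation so that $a$ lies in the interior of $\conv\{B,C,x\}$, that $b$ lies in the interior of $\conv\{C,A,x\}$, and that $c$ lies in the interior of $\conv\{A,B,x\}$, and observe that after all four stellar subdivisions each of the segments $xA$, $xB$, $xC$ is still an interior edge of the triangulation. Indeed, $xA$ survives both the subdivision of $\conv\{A,B,x\}$ by $c$ (as an edge of $\conv\{A,x,c\}$) and the subdivision of $\conv\{A,C,x\}$ by $b$ (as an edge of $\conv\{A,x,b\}$), so it is shared by exactly the two triangles $\conv\{A,x,b\}$ and $\conv\{A,x,c\}$; the point $a$ plays no role. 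The analogous statements hold for $xB$ (triangles $\conv\{B,x,c\}$ and $\conv\{B,x,a\}$) and for $xC$ (triangles $\conv\{C,x,a\}$ and $\conv\{C,x,b\}$). Also, since $x$ is interior to $\conv\{A,B,C\}$, in each of these three cases the two apex vertices lie on opposite sides of the line through the edge, so the inscribed-angle criterion applies.

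Next I would recall (see Edelsbrunner~\cite{Edelsbrunner}) that an interior edge $pq$ with adjacent triangles $\conv\{p,q,r\}$ and $\conv\{p,q,s\}$, where $r,s$ lie on opposite sides of the line $pq$, is locally Delaunay if and only if $\angle prq+\angle psq<\pi$. Writing $\angle_v(p,q)$ for the angle at apex $v$ between the rays $vp$ and $vq$, and assuming for contradiction that all three edges $xA$, $xB$, $xC$ are locally Delaunay, I obtain the three inequalities
\[
\angle_b(A,x)+\angle_c(A,x)<\pi,\qquad
\angle_c(B,x)+\angle_a(B,x)<\pi,\qquad
\angle_a(C,x)+\angle_b(C,x)<\pi .
\]
Adding these and grouping the six terms by their apex gives
\[
\bigl[\angle_a(B,x)+\angle_a(C,x)\bigr]+\bigl[\angle_b(A,x)+\angle_b(C,x)\bigr]+\bigl[\angle_c(A,x)+\angle_c(B,x)\bigr]<3\pi .
\]

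The final step uses that each of $a,b,c$ sits strictly inside a triangle one of whose vertices is $x$: from $a\in\mathrm{int}\,\conv\{B,C,x\}$ the three angles at $a$ subtending the edges of that triangle satisfy $\angle_a(B,C)+\angle_a(C,x)+\angle_a(x,B)=2\pi$, hence $\angle_a(B,x)+\angle_a(C,x)=2\pi-\angle_a(B,C)$, and similarly for $b$ and $c$. Substituting turns the displayed inequality into
\[
6\pi-\bigl(\angle_a(B,C)+\angle_b(C,A)+\angle_c(A,B)\bigr)<3\pi,
\]
that is, $\angle_a(B,C)+\angle_b(C,A)+\angle_c(A,B)>3\pi$. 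But each summand is an angle between two distinct rays emanating from a point that does not lie on the corresponding line (for instance $a$ is not on the line through $B$ and $C$), hence lies in the open interval $(0,\pi)$, so the sum is strictly less than $3\pi$ --- a contradiction. Thus at least one of the edges $xA$, $xB$, $xC$ fails the locally Delaunay criterion, and by the Delaunay Lemma~\ref{lemma:localDelaunay}(4) the triangulation is not a Delaunay triangulation.

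I expect the only delicate points to be bookkeeping --- correctly tracking which two triangles contain each of $xA$, $xB$, $xC$ after the four subdivisions --- together with being precise about the strict form of the criterion. The count is in fact robust: it yields a contradiction even with the non-strict inequalities $\le\pi$ in place of $<\pi$, since the angle sum $\angle_a(B,C)+\angle_b(C,A)+\angle_c(A,B)$ is bounded away from $3\pi$.
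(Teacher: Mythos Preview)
Your argument is correct and is essentially the same as the paper's: both proofs use that the nine angles at $a,b,c$ sum to $6\pi$, that the three ``boundary'' angles $\angle_a(B,C),\angle_b(C,A),\angle_c(A,B)$ are each strictly less than $\pi$, and that the remaining six are precisely the opposite-angle pairs governing the locally Delaunay criterion for $Ax$, $Bx$, $Cx$. You have simply written out the bookkeeping more explicitly and phrased the contradiction in the reverse direction (deriving $\sum>3\pi$ for the boundary angles rather than for the six inner angles).
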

 
\begin{proof} 
	The nine angles at $a,b$ and $c$ sum up to $6\pi$. The three angles
	that lie in triangles that contain a boundary edge are each smaller than $\pi$,
	so the remaining six angles must sum to more than $3\pi$. 
	However, each of the three edges $Ax$, $Bx$ and $Cx$
	is locally Delaunay if and only if its two opposite angles sum to less than
	$\pi$. Hence not all three edges can be locally Delaunay. See Figure~\ref{Pic:Bound2D}.
\end{proof}
	\begin{figure}[htb!]
		\centering
	    \begin{tikzpicture}
		\coordinate (A) at (0,0);
		\coordinate (B) at (5,0);
		\coordinate (C) at (2.5,4);
		\coordinate (a) at (3.8,1.4);
		\coordinate (b) at (1.4,1.8);
		\coordinate (c) at (2.6,0.5);
		\coordinate (Z) at (2.5,1.5);
		\draw[style=thick] 
		   +(A) node [anchor=east] {$A$}
		   +(B) node [anchor=west] {$B$}
		   +(C) node [anchor=south] {$C$}
		   +(a) node [anchor=north east] {$a$}
		   +(b) node [anchor=north] {$b$}
		   +(c) node [anchor=south west] {$c$}
		   +(Z) node [anchor=south west] {$x$};
		\draw[style=thick] (A) -- (B) -- (C) -- cycle;
		\draw[style=thick] (A) --(Z)--(B)--(Z)--(C);
		\draw[] (A)--(c)--(B)--(a)--(C)--(b)-- cycle;
		\draw[ultra thick,red] (Z) -- (C);
		\coordinate (CZ) at ($(C)!.5!(Z)$);
		\coordinate (CZx) at ($(CZ)!3mm!90:(Z)$);
		\node (Circ) [draw,color=red,style=dashed] at (CZx) [circle through={(Z)}] {};
		\draw[color=red,thick]($(b)!.3cm!(Z)$) arc(5:71:.3cm);
		\draw[color=red,thick]($(a)!.3cm!(Z)$) arc(170:113:.3cm);
		\draw[color=red,thick]($(c)!.3cm!(A)$) arc(190:349:.3cm);
		\draw[] (a)--(Z)--(b)--(Z)--(c);
	      \end{tikzpicture}
		\caption{%
		 In any triangulation of this combinatoral type 
		 at least one of the three edges $Ax$, $Bx$ and $Cx$ is not locally Delaunay.}
		\label{Pic:Bound2D}
	\end{figure}
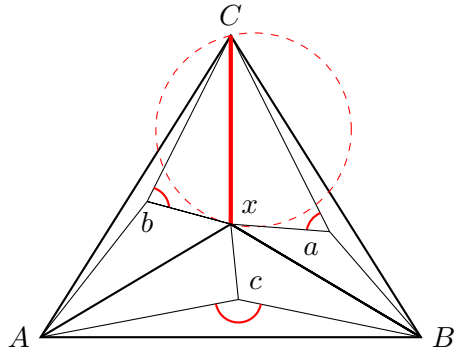

\subsection{Stereographic projection}\label{subsec:stereographic}

The \emph{stereographic projection} 
\[
\pi: S^{d-1}\setminus\{N\}\ \ \longrightarrow\ \ \R^{d-1}\times\{0\}
\]
is the bijective map that projects every point $x\neq N$ of the sphere $S^{d-1}$ 
along the ray through $x$ starting in
the north pole $N$ to the equator hyperplane of the sphere, which we identify with $\R^{d-1}$.
The inversion $x \mapsto N+2\frac{x-N}{\|x-N\|^2}$ in the sphere with center~$N$ 
and radius $\sqrt2$ extends this map to a bijection
$\widehat\pi:\R^d\cup\{\infty\}\rightarrow\R^d\cup\{\infty\}$. 
This sphere inversion is a Möbius transformation: It maps spheres to spheres,
where spheres through $N$ are mapped to hyperplanes (that is, spheres through $\infty$).

The stereographic projection identifies (the vertex sets of)
inscribed $d$-polytopes that have a vertex at the north pole
with (the vertex sets of) Delaunay subdivisions in $\R^{d-1}$.

\begin{proposition}[inscribed polytopes and Delaunay subdivisions]%
	\label{prop:Stereographic}%
	Let $S^{d-1}$ denote the standard unit sphere in $\R^d$ and let $P$
	be an inscribed $d$-polytope whose vertex set $V\dotcup\{N\}\subset S^{d-1}$ 
	includes the north pole $N$ of $S^{d-1}$.
	
	Then $\pi(V)$ is the vertex set of a Delaunay subdivision in $\R^{d-1}$
	whose $(d-1)$-faces correspond to the facets of $P$ that do not contain
	the vertex $N$. 
	
	Conversely, if $W$ is a finite set that affinely spans $\R^{d-1}\times\{0\}$,
	then $\pi^{-1}(W)\cup\{N\}$ is the vertex set of an inscribed $d$-polytope 
	$P$ whose facets that miss $N$ are given by the $(d-1)$-faces of the Delaunay subdivision of~$W$
	and the facets of $P$ that contain the north pole $N$ are exactly the
	convex hulls $\conv(\pi^{-1}(F\cap W)\cup\{N\})$ given by the facet $F$ of $\overline P:=\conv(W)$.
\end{proposition}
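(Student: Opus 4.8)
The plan is to lean entirely on the elementary properties of the sphere inversion $\widehat\pi$: it is a homeomorphism of $S^d=\R^d\cup\{\infty\}$ that sends $S^{d-1}$ onto the equator hyperplane $E:=\R^{d-1}\times\{0\}$ (completed by $\infty$), interchanges $N$ and $\infty$, carries round spheres to round spheres, and fixes each hyperplane through $N$ together with its two open halfspaces. The first thing I would extract is a \emph{cap correspondence}: if $\Sigma\subset S^{d-1}$ is a $(d-2)$-sphere with $N\notin\Sigma$, cutting $S^{d-1}$ into two open spherical caps, then $\widehat\pi(\Sigma)$ is a $(d-2)$-sphere in $\R^{d-1}$, the cap containing $N$ is mapped (minus $N$) onto the exterior of $\widehat\pi(\Sigma)$, and the other cap onto its interior — this is immediate from $\widehat\pi$ restricting to a homeomorphism $S^{d-1}\to E\cup\{\infty\}$ that sends $N\mapsto\infty$.

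\emph{Forward direction.} Let $P\subset S^{d-1}$ be inscribed with vertex set $V\dotcup\{N\}$ and $N$ a vertex. For each facet $F$ with $N\notin F$, its hyperplane $H_F$ misses $N$, so $\Sigma_F:=H_F\cap S^{d-1}$ is a $(d-2)$-sphere through the vertices of $F$ but not through $N$; the remaining vertices of $P$, namely the points of $V\setminus F$ and $N$, all lie strictly on the $P$-side of $H_F$, hence in the cap of $S^{d-1}\setminus\Sigma_F$ that contains $N$. By the cap correspondence $\pi(F)$ lies on $\widehat\pi(\Sigma_F)$ while $\pi(V\setminus F)$ lies strictly outside it, so $Q_F:=\conv(\pi(F))$ is an inscribed $(d-1)$-polytope satisfying the empty circumsphere condition for $\pi(V)$. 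Since radial projection $\rho_N$ from $N$ onto $E$ is a projective map whose exceptional hyperplane $\{x_d=1\}$ meets $P\subset\bar B^d$ only in $N$, and since $N\notin H_F$, it restricts to a projective isomorphism $F\to Q_F$; hence each $Q_F$ is genuinely $(d-1)$-dimensional with vertex set $\pi(F)$, and $F\mapsto Q_F$ is a face-preserving bijection from the facets of $P$ missing $N$ onto the cells of $\mathcal T:=\{Q_F:N\notin F\}$. What remains is to see $\mathcal T$ is a subdivision of $\conv(\pi(V))$: the facets missing $N$ form the antistar of $N$ in $\partial P$, a $(d-1)$-ball on which $\rho_N$ is injective and which $\rho_N$ maps onto $\conv(\rho_N(V))=\conv(\pi(V))$; so the $Q_F$ cover $\conv(\pi(V))$ with pairwise disjoint interiors, face-to-face, and with all of $\pi(V)$ as vertices. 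As every $(d-1)$-cell of $\mathcal T$ is Delaunay, the definition and uniqueness of the Delaunay subdivision (cf.\ Lemma~\ref{lemma:localDelaunay}) identify $\mathcal T$ with the Delaunay subdivision of $\pi(V)$, bijection included.

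\emph{Converse.} Given a finite $W$ that affinely spans $E$, I would set $V:=\pi^{-1}(W)$ and $P:=\conv(V\cup\{N\})$. As points of $S^{d-1}$ are extreme in $\bar B^d$, every point of $V\cup\{N\}$ is a vertex of $P$, and $P$ is full-dimensional because $W$ spans $E$ (a short argument with the spheres-to-spheres property of $\widehat\pi$); in particular $N$ is a vertex. The forward direction then gives that $\pi(V)=W$ carries the Delaunay subdivision whose $(d-1)$-cells match the facets of $P$ missing $N$. For a facet $G$ of $P$ with $N\in G$, a supporting hyperplane $H_G$ at $G$ passes through $N$, so $\widehat\pi$ fixes $H_G$ and its halfspaces; hence $V\subset H_G^-$ forces $W=\widehat\pi(V)\subset H_G^-$, so $H_G\cap E$ supports $\overline P:=\conv(W)$ in $E$ and cuts out a face $F:=\overline P\cap(H_G\cap E)$. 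Conversely, each supporting hyperplane $H$ of $\overline P$ in $E$ spans with $N$ a supporting hyperplane $\aff(H\cup\{N\})$ of $P$ at a face containing $N$. Tracking vertices through $\pi$, and checking via the same spheres-to-spheres bookkeeping that the correspondence preserves codimension (so that facets match facets), yields $G=\conv(\pi^{-1}(F\cap W)\cup\{N\})$ and the asserted bijection.

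\emph{Main obstacle.} The single step that is not pure bookkeeping is the claim, in the forward direction, that the $Q_F$ tile $\conv(\pi(V))$ — equivalently, that radial projection from the vertex $N$ carries the antistar of $N$ homeomorphically and face-preservingly onto $\conv(\pi(V))$. This is the polytope-theoretic core (a "generalized Schlegel diagram" statement, provable from the beyond/beneath description of $\conv$ with a point) and is where I would concentrate the effort; it can alternatively be bypassed by composing the parabolic lift $w\mapsto(w,\lVert w\rVert^2)$ with the projective transformation $[u:v:t]\mapsto[2u:v-t:v+t]$, under which the lower faces of $\conv\{(w,\lVert w\rVert^2):w\in W\}$ — which manifestly project onto $\conv(W)$ — go precisely to the facets of $\conv(\pi^{-1}(W)\cup\{N\})$ not containing $N$. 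Everything else is elementary manipulation of the properties of $\widehat\pi$ recorded at the outset.
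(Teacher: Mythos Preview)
The paper does not actually give a proof of Proposition~\ref{prop:Stereographic}; it is stated as a known fact, with the stereographic--projection/Delaunay connection credited to Brown~\cite{Brown}, and then immediately specialized to the simplicial case (also without proof). So there is nothing in the paper to compare your argument against.

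That said, your sketch is a correct way to supply the missing details. The cap correspondence under $\widehat\pi$ and the resulting empty-circumsphere verification are routine, and you have rightly isolated the only nontrivial step: that the antistar of $N$ in $\partial P$ is mapped by projection from $N$ bijectively and face-to-face onto a subdivision of $\conv(\pi(V))$. Your proposed alternative via the parabolic lift $w\mapsto(w,\lVert w\rVert^2)$ composed with a projective transformation is the standard route in the computational-geometry literature and is arguably the cleanest way to handle both directions at once. One small remark: you invoke Lemma~\ref{lemma:localDelaunay} for uniqueness, but that lemma is phrased for \emph{triangulations}; for general Delaunay \emph{subdivisions} uniqueness is simply built into the definition the paper gives in Section~\ref{subsec:Delaunay}, so cite that instead.
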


In the following, we will need this theorem specialized to the simplicial case. 

\begin{proposition}[inscribed simplicial polytopes and Delaunay triangulations]%
	\label{prop:Stereographic_simplicial} 
	Let $S^{d-1}$ denote the standard unit sphere in $\R^d$  
	and let $P$ be an inscribed simplicial $d$-polytope whose vertex set 
	$V\dotcup\{N\}\subset S^{d-1}$ includes the north pole $N$ of $S^{d-1}$.
	
	Then $\pi(V)$ is the vertex set of a Delaunay triangulation in $\R^{d-1}$
	whose $(d-1)$-simplices correspond to the facets of $P$ that do not contain
	the vertex $N$. 
	
	Conversely, if $W\subset \R^{d-1}\times\{0\}$ is a finite set with a Delaunay triangulation $\mathcal T$
	whose convex hull $\overline P:=\conv(W)$ is a simplicial $(d-1)$-polytope 
	that has no points of $W$ on the boundary except for the vertices, 
	then $\pi^{-1}(W)\cup\{N\}$ is the vertex set of a simplicial inscribed $d$-polytope 
	$P=\conv(\pi^{-1}(W)\cup\{N\})$ whose
	facets that miss $N$ are given by the $(d-1)$-simplices of the Delaunay triangulation $\mathcal T$
	and the facets of $P$ that contain the north pole $N$ are exactly the
	convex hulls $\conv(\pi^{-1}(F\cap W)\cup\{N\})$ given by the facet $F$ of $\overline P$.
\end{proposition}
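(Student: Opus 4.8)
The plan is to derive Proposition~\ref{prop:Stereographic_simplicial} from the general Proposition~\ref{prop:Stereographic} by checking, in each of the two directions, that the additional hypotheses upgrade ``subdivision'' to ``triangulation'' and ``polytope'' to ``simplicial polytope''. No geometry beyond the stereographic correspondence already established in Proposition~\ref{prop:Stereographic} is needed; what is left is bookkeeping about faces.

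For the first direction I would start from an inscribed simplicial $d$-polytope $P$ having $N$ among its vertices and apply Proposition~\ref{prop:Stereographic}: the set $\pi(V)$ is the vertex set of a Delaunay subdivision $\mathcal D$ of $\conv(\pi(V))$ whose $(d-1)$-cells are in bijection with the facets of $P$ that do not contain $N$. Since $P$ is simplicial, each such facet is a $(d-1)$-simplex, hence every $(d-1)$-cell of $\mathcal D$ is a simplex; as every face of a simplex is again a simplex, $\mathcal D$ is a simplicial complex, and therefore it is the Delaunay triangulation of $\pi(V)$. The asserted correspondence between its $(d-1)$-simplices and the facets of $P$ missing $N$ is then precisely the one furnished by Proposition~\ref{prop:Stereographic}.

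For the converse I would take $W\subset\R^{d-1}\times\{0\}$ carrying a Delaunay triangulation $\mathcal T$ with $\overline P:=\conv(W)$ a simplicial $(d-1)$-polytope that meets $W$ on its boundary only in its vertices. Since $\overline P$ is full-dimensional, $W$ affinely spans $\R^{d-1}\times\{0\}$, so the converse half of Proposition~\ref{prop:Stereographic} applies and produces an inscribed $d$-polytope $P=\conv(\pi^{-1}(W)\cup\{N\})$ with vertex set $\pi^{-1}(W)\cup\{N\}$, whose facets missing $N$ are the $(d-1)$-faces of the Delaunay subdivision of $W$ --- which by hypothesis equals $\mathcal T$, so these facets are $(d-1)$-simplices --- and whose facets containing $N$ are the polytopes $\conv(\pi^{-1}(F\cap W)\cup\{N\})$, one for each facet $F$ of $\overline P$. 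Here $F$ is a $(d-2)$-simplex with $d-1$ vertices, and since $W$ meets $\partial\overline P$ only in its vertices, $F\cap W$ consists exactly of those $d-1$ vertices; hence each such facet of $P$ has at most $d$ vertices, and being $(d-1)$-dimensional it must be a $(d-1)$-simplex. Thus every facet of $P$ is a simplex, i.e.\ $P$ is simplicial, and its two families of facets are exactly as claimed.

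I do not expect a genuinely hard step once Proposition~\ref{prop:Stereographic} is available; the one place that needs care is the precise use of the hypotheses in the converse. Both ``$\overline P$ simplicial'' and ``$W$ meets $\partial\overline P$ only in its vertices'' are indispensable, since if either failed some facet $F$ of $\overline P$ would give rise to a facet $\conv(\pi^{-1}(F\cap W)\cup\{N\})$ of $P$ with at least $d+1$ vertices, and then $P$ would not be simplicial. It is also worth recalling that the phrase ``is the vertex set of'' in Proposition~\ref{prop:Stereographic} already rules out redundant points, so no separate verification that the listed points are genuine vertices of $P$ (or of $\mathcal T$) is required.
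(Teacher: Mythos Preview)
Your proposal is correct and matches the paper's treatment: the paper states Proposition~\ref{prop:Stereographic_simplicial} as the simplicial specialization of Proposition~\ref{prop:Stereographic} and gives no separate proof, so the bookkeeping you spell out is exactly what is implicitly intended. Your explicit remark on why both extra hypotheses in the converse are needed is a welcome addition.
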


As a corollary to this we obtain that the 
Main Theorems \ref{mainthm:polytopes} (for polytopes) and~\ref{mainthm:Delaunay} (for Delaunay triangulations) are 
equivalent. We will prove Theorem~\ref{mainthm:Delaunay} below. We also obtain that the Lower Bound Theorem
is tight for inscribable polytopes, by applying Proposition~\ref{prop:Stereographic_simplicial} to Example~\ref{example:simplex_subdivided}:

\begin{corollary}\label{cor:LBTtight}
    For all $d\ge2$, $n\ge0$, there is an inscribed stacked $d$-polytope on $d+1+n$ vertices. 
\end{corollary}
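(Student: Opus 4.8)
```latex
The plan is to combine the construction of Example~\ref{example:simplex_subdivided} with the
stereographic dictionary of Proposition~\ref{prop:Stereographic_simplicial}. First I would
dispose of the low-dimensional and degenerate cases: for $d=2$ one simply takes a convex
$(3+n)$-gon inscribed in a circle, which is a stacked $2$-polytope on $3+n$ vertices, and for
$n=0$ any $d$-simplex is (trivially) inscribable, so from now on I assume $d\ge3$ and $n\ge1$.

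Next I would recall the content of Figure~\ref{Pic:LowerBound} and
Example~\ref{example:simplex_subdivided} in the form needed here: fix a $(d-1)$-simplex
$\sigma=\conv\{v_0,\dots,v_{d-1}\}$ in $\R^{d-1}$, fix a point $z$ in the relative interior of
$\sigma$, and choose $n$ distinct points $p_1,\dots,p_n$ on the segment from $v_0$ to $z$,
ordered so that $p_1$ is nearest $v_0$. Performing the single stellar subdivision by $p_1$ at
$\sigma$, then by $p_2$ at the unique $(d-1)$-simplex of the new triangulation that meets the
ray from $v_0$ through $z$, and so on, produces a triangulation $\mathcal T$ of $\sigma$ whose
dual tree is a path. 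The key claim I need is that $\mathcal T$ is the Delaunay triangulation of
its vertex set $W:=\{v_0,\dots,v_{d-1},p_1,\dots,p_n\}$. By the Delaunay Lemma
(Lemma~\ref{lemma:localDelaunay}, part~(4)) it suffices to check that every interior
$(d-2)$-face of $\mathcal T$ is locally Delaunay; the interior $(d-2)$-faces are exactly the
simplices $\conv(\{p_i\}\cup F)$ where $F$ ranges over the $(d-3)$-faces of a facet of $\sigma$
not containing $v_0$, together with the $(d-2)$-faces $\conv\{p_i,p_{i+1},\dots\}$ separating
consecutive slices. For each such face the two apex vertices lie on opposite sides, one of them
being a $v_j$ (or $p_{i-1}$) and the other being $p_{i+1}$ (or a $v_j$), and the argument of
Lemma~\ref{lem:stellar} — the circumsphere of a simplex created by stellar subdivision cannot
reach the vertex it omits without swallowing the inserted point — shows the locally Delaunay
inequality holds. (One can also argue more slickly by iterating
Corollary~\ref{cor:can_be_undone}: if $\mathcal T$ with the topmost stellar subdivision undone
is Delaunay, adding that subdivision back preserves Delaunayness because it is, slice by slice,
the situation of Lemma~\ref{lem:stellar}.)

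Once $\mathcal T$ is known to be the Delaunay triangulation of $W$, I invoke the converse
direction of Proposition~\ref{prop:Stereographic_simplicial}. Its hypotheses are met:
$\overline P:=\conv(W)=\sigma$ is a simplicial $(d-1)$-polytope, and no point of $W$ lies on
$\partial\sigma$ except the vertices $v_0,\dots,v_{d-1}$, because every $p_i$ lies in the open
segment $(v_0,z)$ with $z\in\relint\sigma$. Hence $\pi^{-1}(W)\cup\{N\}\subset S^{d-1}$ is the
vertex set of a simplicial inscribed $d$-polytope $P$. It remains to identify $P$
combinatorially as a stacked polytope on $d+1+n$ vertices with a path-shaped dual tree: by the
Proposition, the facets of $P$ missing $N$ are the $n+1$ full-dimensional simplices of
$\mathcal T$ (these form a ``path'' strip), and the facets containing $N$ are the cones
$\conv(\pi^{-1}(F\cap W)\cup\{N\})$ over the $d$ facets $F$ of $\sigma$. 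A short bookkeeping
check shows this facet list is exactly that of the stacked $d$-polytope obtained by iterated
stacking along a path of length $n$: the initial simplex is $\conv(\pi^{-1}(\{v_0,\dots,v_{d-1},p_1\}))$
stacked onto a facet of the simplex $\conv(\pi^{-1}(\{v_1,\dots,v_{d-1},p_1\})\cup\{N\})$ — or,
more cleanly, one observes that a single stellar subdivision of a $(d-1)$-face of a triangulation
corresponds under stereographic projection precisely to a stacking operation on the dual
$d$-polytope (this is the correspondence already announced in the introduction). Therefore $P$
is an inscribed stacked $d$-polytope with $d+1+n$ vertices, proving the corollary.

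The only genuine obstacle is the Delaunayness of $\mathcal T$ — i.e.\ making the
empty-circumsphere / locally Delaunay verification watertight in general dimension, where one
must be careful that the relevant $(d-2)$-faces really are all of the interior ones and that the
apex vertices are correctly identified. Everything else is a direct application of results
already in the excerpt (Proposition~\ref{prop:Stereographic_simplicial},
Lemma~\ref{lem:stellar}, Corollary~\ref{cor:can_be_undone}) together with routine combinatorial
bookkeeping, so I would keep the exposition brief and lean on
Example~\ref{example:simplex_subdivided}/Figure~\ref{Pic:LowerBound} for the geometric heart of
the argument.
```
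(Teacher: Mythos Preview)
Your proposal is correct and follows exactly the paper's approach: the paper's entire proof is the one-line remark ``by applying Proposition~\ref{prop:Stereographic_simplicial} to Example~\ref{example:simplex_subdivided},'' and you have simply spelled out the details of that application (handling $d=2$ and $n=0$ separately, verifying the hypotheses of Proposition~\ref{prop:Stereographic_simplicial}, and identifying the resulting polytope as stacked). Your acknowledged ``genuine obstacle'' --- the Delaunayness of the path-shaped subdivision --- is likewise left at the level of Figure~\ref{Pic:LowerBound} in the paper.
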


We end this section with a proof of a very special (but crucial) part and case of Theorem~\ref{mainthm:Delaunay}.

\begin{proof}[Proof of the “only if” part of Theorem~\ref{mainthm:Delaunay} for $d-1=2$.]
	By Lemma \ref{lemma:localDelaunay} and Corollary \ref{cor:can_be_undone} 
	it suffices to show that the configuration of Figure~\ref{Pic:Bound2D}  
	cannot be realized as a Delaunay triangulation.
	(This was proved in Lemma~\ref{lemma:doublestellartriangle}.
	Via Proposition~\ref{prop:Stereographic_simplicial} it is equivalent to the fact
	that the polytope obtained by stacking onto all four facets of a tetrahedron is not
	inscribable, which was first proved by Steinitz \cite{Steinitz} \cite[Sect.~13.5]{Gruenbaum}.) 	
\end{proof}

\subsection{Some inscribable polytopes}\label{sec:f-inscribable}

\subsubsection{3-dimensional polytopes}\label{subsec:f-inscribable_3poly}

\begin{proposition}	
All $f$-vectors of $3$-polytopes occur for inscribable $3$-polytopes.
\end{proposition}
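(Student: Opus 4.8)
The plan is to reduce the statement, via Euler's relation and Steinitz's description of the $f$-vectors of $3$-polytopes, to a purely combinatorial target, dispose of the two boundary lines by known constructions, and realize the interior by explicit families of inscribed polytopes, using stereographic projection to handle the hardest cases in the language of planar Delaunay subdivisions. Concretely: a triple $(f_0,f_1,f_2)$ is the $f$-vector of a $3$-polytope if and only if $f_0-f_1+f_2=2$, $f_0\le 2f_2-4$ and $f_2\le 2f_0-4$ (Steinitz~\cite{Steinitz}); since $f_1$ is then determined, it suffices to produce, for every pair of integers $(f_0,f_2)$ with $4\le f_0$, $4\le f_2$, $f_2\le 2f_0-4$ and $f_0\le 2f_2-4$, one inscribable $3$-polytope with $f_0$ vertices and $f_2$ facets.

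I would first handle the two boundary lines. The line $f_2=2f_0-4$ consists of the $f$-vectors of simplicial $3$-polytopes, and here the stacked polytopes already suffice: they are inscribable by Corollary~\ref{cor:LBTtight}. The line $f_0=2f_2-4$ consists of the $f$-vectors of simple $3$-polytopes; writing $f_2=k+2$, it is realized by the prism over a regular $k$-gon --- place two congruent regular $k$-gons in parallel planes placed symmetrically about the origin, so that all $2k$ vertices lie on a common sphere --- together with the remaining vertex $(4,4)$ realized by the simplex.

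For the interior of the Steinitz region I would pass through stereographic projection. After a rotation putting a vertex at the north pole, Proposition~\ref{prop:Stereographic} identifies an inscribed $3$-polytope with a finite planar point set $W$ together with its Delaunay subdivision, and a short count with Euler's relation gives $f_0=|W|+1$ and $f_2=t+h$, where $t$ is the number of $2$-cells of the subdivision and $h$ the number of vertices of the polygon $\conv(W)$ (extra points of $W$ on the edges of $\conv(W)$ are allowed and do not change these formulas). It therefore suffices to realize, for each interior pair, a planar point set whose Delaunay subdivision has the prescribed $t$ and $h$. For the range $f_0\le f_2$ I would take $m\ge 3$ points on a circle $C$, so that they span a single $m$-gon cell, and attach $j\ge 0$ further points just outside one edge of $C$ in convex but non-cocircular position, cutting off $j$ new triangular cells while $C$ stays empty; this gives $(f_0,f_2)=(m+j+1,\ m+2j+1)$ and, as $m,j$ vary, covers exactly $\{(f_0,f_2):f_0\le f_2\le 2f_0-4\}$. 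For the complementary range $f_2\le f_0$ --- the genuinely delicate case --- I would use configurations with many points on the boundary of a fixed small polygon (or in its interior) subdivided into few cells; the basic template is the ``prism pattern'' (a triangle with equally many points on two of its edges, subdivided into a triangle near one corner, a ladder of cocircular quadrilateral cells, and one further quadrilateral), which one then perturbs --- by making the two edge-clusters unequal, or by stellarly subdividing single quadrilateral cells --- so as to move off the simple boundary line into the interior.

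The main obstacle is verifying that the subdivisions built in the interior step are genuinely Delaunay. By the Delaunay Lemma (Lemma~\ref{lemma:localDelaunay}, statement~(4)) this is a finite, local test: each interior edge must be locally Delaunay. For the symmetric ``fan'' and ``prism'' templates this reduces to writing down the circle through the four vertices of each quadrilateral cell and checking that it excludes all remaining vertices --- an open feasibility condition, so it can be settled either by explicit coordinates or by deforming a configuration that is visibly Delaunay (e.g.\ one of the stellar subdivisions of Lemma~\ref{lem:stellar} / Example~\ref{example:simplex_subdivided}). A secondary task is the bookkeeping: arranging that the two interior families overlap along the diagonal $f_0=f_2$ and covering the finitely many small-parameter cases by hand.
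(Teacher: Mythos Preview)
Your overall strategy is sound and genuinely different from the paper's. The paper does \emph{not} pass through stereographic projection here at all: it exhibits three explicit one-parameter families of inscribed $3$-polytopes in $\R^3$ --- ``wedges over an $n$-gon, stacked $k$ times'' in three slightly different variants --- and checks by direct $f$-vector arithmetic that these three families sweep out the entire Steinitz region. Your route via Proposition~\ref{prop:Stereographic} and planar Delaunay subdivisions is more conceptual and makes good use of the machinery set up in the paper; the circle-plus-outside-points construction for the range $f_0\le f_2$ is clean and does give $t=j+1$ cells and $h=m+j$ hull vertices, hence $(f_0,f_2)=(m+j+1,\,m+2j+1)$ as you claim.

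The soft spot is the range $f_2<f_0$. Your ``prism pattern'' (points on two edges of a triangle) lands exactly on the simple boundary line $f_0=2f_2-4$, as you note, and you propose to fill in the interior by unequal edge counts and by stellar subdivisions of quadrilateral cells. Two issues: first, a single stellar subdivision of a quad changes $(f_0,f_2)$ by $(+1,+3)$, and together with the prism parameter this gives a two-parameter family that hits only a residue class modulo~$5$ in $(f_0,f_2)$, so you will need further moves (and you haven't said which). Second, the stellar subdivision of a \emph{cocircular} quadrilateral cell is not automatically Delaunay for an arbitrary interior point (Lemma~\ref{lem:stellar} is stated for simplices); you would need to place the new point carefully, and the Delaunay verification you flag as the ``main obstacle'' is genuinely nontrivial here. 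None of this is fatal, but the paper's direct three-family construction sidesteps all of it: no Delaunay checks, no residue bookkeeping, and the simple and simplicial boundaries fall out as the extreme parameter values $k=0$ and $n=3$ of a single family.
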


\begin{proof}
	According to Steinitz \cite{Steinitz} \cite[Sect.~10.3]{Gruenbaum}, the set of all $f$-vectors of convex $3$-polytopes
	is 
	\[
	\{(f_0,f_1,f_2)\in\Z^3: f_2\le 2f_0-4,\ f_0\le 2f_2-4,\ f_1=f_0+f_2-2\}. 
	\]
	In Figure~\ref{Pic:Wedge} one can see three types of inscribed $3$-polytopes.
	These can be described as wedges over an $n$-gon that have been stacked $k$ times. 
	By performing these constructions for arbitrary $n\ge3$ and $k\ge0$ we get three families of 
	inscribable $3$-polytopes.
	Their $f$-vectors are
	\begin{eqnarray*}
		f_{\textrm{left}}  &=&(2n-2+k,          3n-3+3k,n+1+2k)=(f_0,f_1,\tfrac{f_0}2+2+\tfrac32k  )\\
		f_{\textrm{middle}}&=&(2n-1+k,          3n-1+3k,n+2+2k)=(f_0,f_1,\tfrac{f_0}2+2+\tfrac32k+\tfrac12)\\
		f_{\textrm{right}} &=&(2n+k,\hspace{7mm}3n+1+3k,n+3+2k)=(f_0,f_1,\tfrac{f_0}2+2+\tfrac32k+1 )
	\end{eqnarray*}
			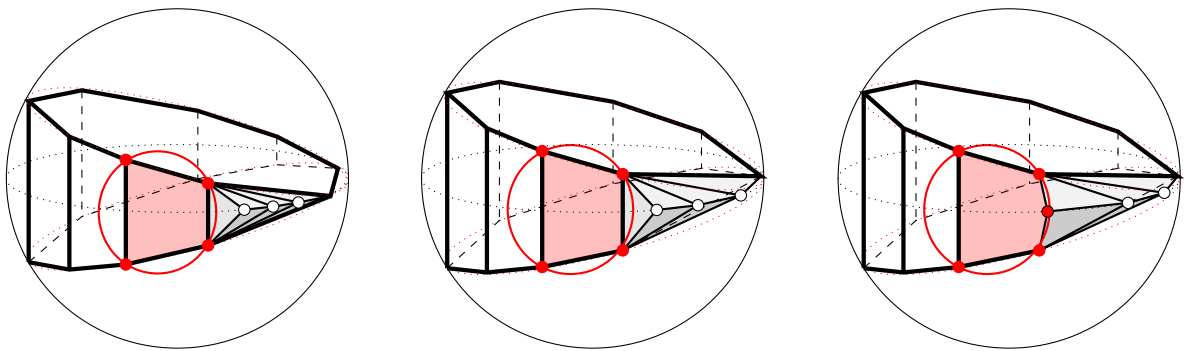
\begin{figure}[htb!]
				\centering	
			    \begin{tikzpicture}[scale=.75]
				\begin{scope}[yshift=-.16cm,xshift=3.1cm,rotate=-15,xshift=-3cm,yscale=.2,xscale=.97]
				\coordinate (Z) at (25:3cm);
				\coordinate (A) at (-25:3cm);
				\coordinate (B) at (280:3cm);
				\coordinate (C) at (250:3cm);
				\coordinate (D) at (225:3cm);
				\coordinate (E) at (180:3cm);
				\coordinate (F) at (135:3cm);
				\coordinate (G) at (90:3cm);
				\coordinate (H) at (60:3cm);
				\end{scope}

				\begin{scope}[yscale=.2]
				\coordinate (R1) at (-45:3cm);
				\coordinate (R2) at (-56:3cm);
				\coordinate (R3) at (-67:3cm);
				\end{scope}

				\coordinate (b) at ($(B)+(0,-1.1)$);
				\coordinate (c) at ($(C)+(0,-1.85)$);
				\coordinate (d) at ($(D)+(0,-2.35)$);
				\coordinate (e) at ($(E)+(0,-2.85)$);
				\coordinate (f) at ($(F)+(0,-2.23)$);
				\coordinate (g) at ($(G)+(0,-1.2)$);
				\coordinate (h) at ($(H)+(0,-.5)$);

				\draw[pink,fill] (B)--(C)--(c)--(b) ;
				\draw[] (0,0) circle (3);
				\draw[dotted,red,xshift=3cm,rotate=15] (-2.9,.08) ellipse (2.9 and .6);
				\draw[fill,color=black!10] (B)--(R3)--(b);
				\draw[fill,color=black!5] (A)--(B)--(R3)--(R2)--(R1);
				\draw[fill,color=black!20] (b)--(R3)--(R2)--(R1)--(A) ;
				\draw[ultra thick] (A)--(b)--(c)--(d)--(e) (B)--(b) (C)--(c) (D)--(d) (E)--(e);
				\draw[ultra thick] (Z)--(A)--(B)--(C)--(D)--(E)--(F)--(G)--(H)--cycle;
				\draw[dashed] (e)--(f)--(g)--(h)--(Z) (F)--(f) (G)--(g) (H)--(h);
				\draw[thick] (A)--(R1)--(R2)--(R3) (B)--(R1)--(b) (B)--(R2)--(b) (B)--(R3)--(b);
				\draw[dotted,red,xshift=3cm,rotate=-15] (-2.9,-.1) ellipse (2.9 and .6);
				\draw[thick,red] (-.35,-.6) ellipse (1.03 and 1.08);
				\draw[dotted] (0,0) ellipse (3 and .6);
				\draw[draw=black,fill=white] +(R1) circle (.1) +(R2) circle (.1) +(R3) circle (.1);
				\draw[red,fill] +(B) circle(.1) +(C) circle(.1) +(c)circle(.1) +(b)circle(.1) ;
			    \end{tikzpicture}
			    \qquad
			    \begin{tikzpicture}[scale=.75]
				\begin{scope}[xshift=3.1cm,rotate=-15,xshift=-3cm,yscale=.2,xscale=.95]
				\coordinate (A) at (0:3cm);
				\coordinate (B) at (280:3cm);
				\coordinate (C) at (250:3cm);
				\coordinate (D) at (225:3cm);
				\coordinate (E) at (180:3cm);
				\coordinate (F) at (135:3cm);
				\coordinate (G) at (90:3cm);
				\coordinate (H) at (55:3cm);
				\end{scope}

				\begin{scope}[yscale=.2]
				\coordinate (R1) at (-30:3cm);
				\coordinate (R2) at (-52:3cm);
				\coordinate (R3) at (-68:3cm);
				\end{scope}

				\coordinate (b) at ($(B)+(0,-1.35)$);
				\coordinate (c) at ($(C)+(0,-2.05)$);
				\coordinate (d) at ($(D)+(0,-2.55)$);
				\coordinate (e) at ($(E)+(0,-3.1)$);
				\coordinate (f) at ($(F)+(0,-2.45)$);
				\coordinate (g) at ($(G)+(0,-1.45)$);
				\coordinate (h) at ($(H)+(0,-.65)$);

				\draw[pink,fill] (B)--(C)--(c)--(b) ;
				\draw[] (0,0) circle (3);
				\draw[dotted,red,xshift=3cm,rotate=15] (-2.9,0) ellipse (2.9 and .6);
				\draw[fill,color=black!10] (B)--(R3)--(b);
				\draw[fill,color=black!5] (A)--(B)--(R3)--(R2)--(R1);
				\draw[fill,color=black!20] (b)--(R3)--(R2)--(R1)--(b) ;
				\draw[ultra thick] (b)--(c)--(d)--(e) (B)--(b) (C)--(c) (D)--(d) (E)--(e);
				\draw[ultra thick] (A)--(B)--(C)--(D)--(E)--(F)--(G)--(H)--cycle;
				\draw[dashed] (e)--(f)--(g)--(h)--(A) (F)--(f) (G)--(g) (H)--(h);
				\draw[thick] (A)--(R1)--(R2)--(R3) (B)--(R1)--(b) (B)--(R2)--(b) (B)--(R3)--(b);
				\draw[dotted,red,xshift=3cm,rotate=-15] (-2.9,0) ellipse (2.9 and .6);
				\draw[thick,red] (-.39,-.55) ellipse (1.1 and 1.14);
				\draw[dotted] (0,0) ellipse (3 and .6);
				\draw[dashed] (A)--(b);
				\draw[draw=black,fill=white] +(R1) circle (.1) +(R2) circle (.1) +(R3) circle (.1);
				\draw[red,fill] +(B) circle(.1) +(C) circle(.1) +(c)circle(.1) +(b)circle(.1) ;
			    \end{tikzpicture}
			    \qquad
			    \begin{tikzpicture}[scale=.75]
				\begin{scope}[xshift=3.1cm,rotate=-15,xshift=-3cm,yscale=.2,xscale=.95]
				\coordinate (A) at (0:3cm);
				\coordinate (B) at (280:3cm);
				\coordinate (C) at (250:3cm);
				\coordinate (D) at (225:3cm);
				\coordinate (E) at (180:3cm);
				\coordinate (F) at (135:3cm);
				\coordinate (G) at (90:3cm);
				\coordinate (H) at (55:3cm);
				\end{scope}

				\begin{scope}[yscale=.2]
				\coordinate (R1) at (-25:3cm);
				\coordinate (R2) at (-46:3cm);
				\coordinate (R3) at (-77:3cm);
				\end{scope}

				\coordinate (b) at ($(B)+(0,-1.35)$);
				\coordinate (c) at ($(C)+(0,-2.05)$);
				\coordinate (d) at ($(D)+(0,-2.55)$);
				\coordinate (e) at ($(E)+(0,-3.1)$);
				\coordinate (f) at ($(F)+(0,-2.45)$);
				\coordinate (g) at ($(G)+(0,-1.45)$);
				\coordinate (h) at ($(H)+(0,-.65)$);

				\draw[pink,fill] (B)--(C)--(c)--(b)--(R3) ;
				\draw[] (0,0) circle (3);
				\draw[dotted,red,xshift=3cm,rotate=15] (-2.9,0) ellipse (2.9 and .6);
				\draw[fill,color=black!5] (A)--(B)--(R3)--(R2)--(R1);
				\draw[fill,color=black!20] (b)--(R3)--(R2)--(R1)--(b) ;
				\draw[ultra thick] (b)--(c)--(d)--(e)  (C)--(c) (D)--(d) (E)--(e);
				\draw[ultra thick] (A)--(B)--(C)--(D)--(E)--(F)--(G)--(H)--cycle;
				\draw[dashed] (e)--(f)--(g)--(h)--(A) (F)--(f) (G)--(g) (H)--(h);
				\draw[dashed] (A)--(b);
				\draw[thick] (A)--(R1)--(R2)--(R3) (B)--(R1)--(b) (B)--(R2)--(b) (B)--(R3)--(b);
				\draw[dotted,red,xshift=3cm,rotate=-15] (-2.9,0) ellipse (2.9 and .6);
				\draw[thick,red] (-.39,-.55) ellipse (1.1 and 1.14);
				\draw[dotted] (0,0) ellipse (3 and .6);
				\draw[draw=black,fill=white] +(R1) circle (.1) +(R2) circle (.1);
				\draw[red,fill] +(B) circle(.1) +(C) circle(.1) +(c)circle(.1) +(b)circle(.1) ;
				\draw[draw=black,fill=red](R3) circle (.1);
			    \end{tikzpicture}
			\caption{Three constructions for inscribed $3$-polytopes that produce all possible $f$-vectors.}
			\label{Pic:Wedge}
		\end{figure}
		
	\noindent	
	For $k=0$ the first type produces the $f$-vectors of simple polytopes;
	the first two types provide all $f$-vectors with 
	the minimal number of facets for any given number of vertices. 
	For $n=3$, the first type produces inscribable stacked $3$-polytopes with arbitrary number of vertices.
	These are simplicial and hence give the maximal number of facets for any given number of vertices.
	It is easy to see that for any number of vertices all permissible numbers of facets
	can be obtained by choosing the right $n$, $k$, and type.
\end{proof}

\subsubsection{Neighborly polytopes}\label{subsec:f-inscribable_cyclic}

\begin{proposition}	
The $d$-dimensional cyclic polytope $C_d(n)$ with $n$ vertices
is inscribable for all $d\ge2$ and $n\ge d+1$.

Thus all $f$-vectors of neighborly polytopes occur for inscribable polytopes.
\end{proposition}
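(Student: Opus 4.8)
The plan is to exhibit an inscribed realization of $C_d(n)$ directly, handling even and odd $d$ separately, and then to deduce the $f$-vector statement from the classical fact that any two simplicial $\lfloor d/2\rfloor$-neighborly $d$-polytopes on $n$ vertices have the same $f$-vector. The even-dimensional case is handled by a spherical version of the moment curve; the odd-dimensional case needs an extra idea and is, in my estimation, where the real work lies.

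For $d=2k$ even, I would use the trigonometric moment curve
\[
 \gamma(t)=(\cos t,\sin t,\cos 2t,\sin 2t,\dots,\cos kt,\sin kt)\in\R^{2k},
\]
which lies on the sphere of radius $\sqrt k$ about the origin because $\sum_{j=1}^{k}(\cos^2 jt+\sin^2 jt)=k$. Restricting any linear equation to $\gamma$ yields a trigonometric polynomial of degree at most $k$, hence with at most $2k$ zeros in $[0,2\pi)$; so every affine hyperplane meets the closed curve $\gamma([0,2\pi))$ in at most $2k$ points. This \emph{order $2k$} property is exactly what drives the standard argument (via Gale's evenness condition) that $\conv\{\gamma(t_1),\dots,\gamma(t_n)\}$ is combinatorially $C_{2k}(n)$ for any $n$ distinct parameters. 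So $C_{2k}(n)$ is inscribed; in particular this settles $d=2$ with $k=1$ (a regular $n$-gon).

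For $d=2k+1$ odd there is no spherical curve all of whose finite subsets have cyclic convex hull (a closed curve meets a generic hyperplane in an even number of points, so a closed order-$d$ curve with $d$ odd has order $\le d-1$ and cannot affinely span $\R^d$), so I would argue as follows. Split off two of the $n$ vertices as the poles $\pm e_{2k+1}$ of $S^{2k}\subset\R^{2k+1}$ and place the remaining $n-2$ vertices on a short arc of the trigonometric moment curve $\gamma$ lying inside the equatorial great subsphere $S^{2k-1}=S^{2k}\cap\{x_{2k+1}=0\}$; I claim the convex hull is an inscribed $C_{2k+1}(n)$. To verify this I would apply Proposition~\ref{prop:Stereographic_simplicial} with the north pole $N=e_{2k+1}$: the south pole projects to the centre $0\in\R^{2k}$, the arc points are fixed by $\pi$ (they lie in the equator hyperplane), and it remains to show that (a) $W:=\{0\}\cup\{\gamma(t_1),\dots,\gamma(t_{n-2})\}$ is in convex position with $\conv(W)\cong C_{2k}(n-1)$, and (b) the Delaunay triangulation of $W$ is precisely the subcomplex of $\partial C_{2k+1}(n)$ formed by the facets avoiding one fixed vertex; then Proposition~\ref{prop:Stereographic_simplicial} reassembles $\conv(W)$ together with $N$ into $C_{2k+1}(n)$. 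For (a) one uses that, for a sufficiently short arc rotated into a suitable position within $S^{2k-1}$, the tiny cyclic polytope $\conv\{\gamma(t_i)\}\cong C_{2k}(n-2)$ is seen by the far point $0$ exactly as a point at one end of the moment curve is, so that $0$ extends it to $C_{2k}(n-1)$. \textbf{The main obstacle is (b)}: pinning down the Delaunay triangulation of $W$ exactly. I would do this via the locally-Delaunay form of the Delaunay Lemma~\ref{lemma:localDelaunay}(4), verifying an empty-sphere condition for each interior $(2k-1)$-face of the candidate triangulation; the base case $d=3$ (two poles plus $n-2$ points on a circular arc, projecting to the centre plus $n-2$ nearly collinear points, whose Delaunay triangulation is the fan from the centre) can be checked by hand and reproduces exactly the structure of $C_3(n)$ in which two vertices are joined to all others while the remaining ones form a path, anchoring the general argument (for $n=d+1$ the polytope is just a simplex).

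Finally, for the $f$-vector statement: a simplicial $d$-polytope on $n$ vertices is neighborly if and only if its $f$-vector equals that of $C_d(n)$ --- neighborliness forces the first $\lfloor d/2\rfloor$ entries to be $\binom n1,\dots,\binom n{\lfloor d/2\rfloor}$, and the Dehn--Sommerville relations determine the rest from these --- so the inscribability of all cyclic polytopes established above shows that every $f$-vector of a neighborly polytope is attained by an inscribable one.
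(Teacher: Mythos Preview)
Your even-dimensional case via the trigonometric moment curve is correct and is exactly the paper's Proof~3 (attributed there to Gr\"unbaum). The $f$-vector deduction at the end is also fine.

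For odd $d$, however, your route is genuinely different from the paper's, and as written it has a real gap. You yourself flag step~(b)---identifying the Delaunay triangulation of $W=\{0\}\cup\{\gamma(t_i)\}$ with the correct subcomplex of $\partial C_{2k+1}(n)$---as ``the main obstacle,'' but you do not carry it out: checking $d=3$ by hand does not ``anchor'' anything for larger $d$, and citing the Delaunay Lemma names a method, not an argument. Step~(a) is also not free: that the origin together with a short arc of the trigonometric curve gives $C_{2k}(n-1)$ requires a careful choice of arc and a Gale-evenness verification that you only gesture at. More basically, you have not established that your inscribed configuration (two antipodal poles plus $n-2$ equatorial points) realizes $C_{2k+1}(n)$ at all; note that for $d\ge5$ the polytope $C_d(n)$ is $2$-neighborly, so the two poles must already be joined by an edge, which shows the combinatorics here is more delicate than the $d=3$ picture suggests.

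The paper bypasses all of this in two ways. Its Proof~2 (Seidel) works uniformly for every $d$ via the single curve
\[
c(t)=\frac{1}{1+t^2+\cdots+t^{2(d-1)}}\,(1,t,t^2,\dots,t^{d-1}),
\]
which lies on a sphere; Descartes' rule of signs shows it has order $d$ for $t>0$, so any $n$ points on it yield an inscribed $C_d(n)$ with no even/odd split and no Delaunay analysis. Its Proof~3 handles odd $d$ by ``vertex splitting'' the already-inscribed $C_{d-1}(n-1)$: one vertex is replaced by two nearby points on the ambient sphere, and Gale's evenness criterion verifies directly that the result is $C_d(n)$. Either route is short and complete; your Delaunay-based approach for odd $d$ would require substantially more work to close.
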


We will sketch three simple proofs for this.

\begin{proof}[Proof~1] 
    The  \emph{standard moment curve} in $\R^{d-1}$ is given by
	\[\gamma(t):=(t,t^2,\dots,t^{d-1}).\]
	This is a curve of order $d$ by Vandermonde's determinant formula.
	If the sequence of parameters $t_1<t_2<\dots< t_{n-1}$ grows fast enough, then for each $i>d$
	the point $\gamma(t_i)$ lies outside all circumspheres of the Delaunay triangulation of 
	$\{\gamma(t_1),\dots,\gamma(t_{i-1})\}$. Thus the Delaunay triangulation of $\{\gamma(t_1),\dots,\gamma(t_{i-1})\}$
	is obtained by induction on $i$, where for suitably large $t_i$ the new facets are given by the ``upper'' facets
	of $\conv\{\gamma(t_1),\dots,\gamma(t_{i-1})\}$ joined to the new vertex $\gamma(t_i)$.
	
	One checks, using Gale's evenness criterion, that thus the facets of the Delaunay triangulation
	of the point set $\{\gamma(t_1),\gamma(t_2),\dots,\gamma(t_{n-1})\}$  correspond exactly to the facets of $C_d(n)$
    that do not contain the last vertex.
    
    Finish the proof via Proposition~\ref{prop:Stereographic_simplicial}.
\end{proof}

\begin{proof}[Proof~2 {\rm(Seidel \cite[p.~521]{Seidel})}]
    The \emph{spherical moment curve} is given by
    \[C:\R^+\rightarrow\R^d, \qquad c(t):=\frac1{1+t^2+t^4+\dots+t^{2(d-1)}}(1,t,t^2,\dots,t^{d-1}). \]
    This curve lies on the image of the
    hyperplane $x_1=1$ under inversion in the origin, that is,
    on the sphere with center $\frac12e_1$ and radius $\frac12$.
    Using Descartes' rule of signs one gets that this curve
    (restricted to the domain $t>0$!) is of order $d$, and thus the convex hull of any
    $n$ distinct points on this curve is an inscribed realization of~$C_d(n)$.
\end{proof}

\begin{proof}[Proof~3 {\rm(Grünbaum \cite[p.~67]{Gruenbaum})}]
    For even $d\ge2$, we consider the \emph{trigonometric moment curve} 
	\[c:(-\pi,\pi]\rightarrow \R^{d}, \qquad c(t):=\left(\sin(t),\cos(t),~\sin(2t),\cos(2t),~\dots,~\sin(\tfrac d2t),\cos(\tfrac d2t)\right)\]
	Obviously its image lies on a sphere.
	We verify that this is a curve of order $d$ using the fact that any nonzero trigonometric polynomial of degree $\tfrac d2$ 
	has at most $d$ zeros per period (see e.g.\ Polya \& Szeg\H{o} \cite[pp.~72-73]{PolyaSzegoeII}). Thus we get 
	that the convex hull of any $n$ points on this curve yields an inscribed realization of $C_d(n)$.
	(Compare \cite[pp.~75-76]{Ziegler}.)
	
	For odd $d\ge3$, we check using Gale's evenness criterion that any ``vertex splitting'' on $C_{d-1}(n-1)$ results in a 
	realization of $C_d(n)$; this yields inscribed realizations of $C_d(n)$ where all vertices except for those labeled $1$ and $n$
	lie on a hyperplane. (See e.g.\ Seidel \cite[p.~528]{Seidel}, where the ``vertex splitting'' is called ``pseudo-bipyramid''.)
\end{proof}

\section{Stacked polytopes of dual degree at most 3 are inscribable}\label{sec:sufficiency}

The following proposition establishes the “if” part of 
Main Theorem \ref{mainthm:Delaunay} (and thus also of
Main Theorem \ref{mainthm:polytopes}).

\begin{proposition}\label{prop:2possible}
	Let $\mathcal T$ be a Delaunay triangulation in $\R^{d-1}$, 
	let $c$ be an interior vertex of degree $d$.
	Then one can perform single stellar subdivisions on two arbitrary $(d-1)$-faces $F_1$ and~$F_2$ 
	of $\mathcal T$ that contain $c$ such that the resulting triangulation
	is again Delaunay. 
\end{proposition}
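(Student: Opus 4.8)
The plan is to work in a neighbourhood of $c$ and to verify the statement with the combinatorial form of the Delaunay criterion, Lemma~\ref{lemma:localDelaunay}(4).

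\medskip\noindent\emph{The local picture.} Since $c$ is an interior vertex of degree $d$, its vertex link in $\mathcal T$ is a triangulated $(d-2)$-sphere on $d$ vertices, hence the boundary complex of a $(d-1)$-simplex $\sigma$ on the $d$ neighbours of $c$; so $c\in\relint\sigma$ and $\operatorname{star}_{\mathcal T}(c)$ is exactly the stellar subdivision of $\sigma$ at $c$. The $(d-1)$-faces through $c$ are the $d$ simplices $c*G$ with $G$ a facet of $\sigma$, and $F_1,F_2$ are two of them; write $F_1=c*G_1$, $F_2=c*G_2$ with $G_1=\sigma\setminus x$, $G_2=\sigma\setminus y$ for distinct vertices $x,y$ of $\sigma$, so that $H:=F_1\cap F_2=c*(\sigma\setminus\{x,y\})$ is a common facet of $F_1$ and $F_2$. (By Corollary~\ref{cor:can_be_undone} the triangulation $\mathcal T\setminus c$ is Delaunay and has $\sigma$ as a cell; this is only for orientation.) I fix $p_1\in\relint F_1$, $p_2\in\relint F_2$, let $\mathcal T'$ be $\mathcal T$ with $F_1,F_2$ replaced by their stellar subdivisions at $p_1,p_2$, and ask which local-Delaunay conditions must be arranged.

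\medskip\noindent\emph{Reduction.} Subdividing $F_i$ at $p_i$ changes only faces inside $F_i$, so every interior $(d-2)$-face of $\mathcal T$ that is not a facet of $F_1$ or $F_2$ survives with an unchanged opposite pair and stays locally Delaunay. The new interior ridges $p_i*E$ ($E$ a codimension-two face of $F_i$) have both opposite vertices among the vertices of $F_i$, and — exactly as in the proof of Lemma~\ref{lem:stellar} — the pencil of $(d-2)$-spheres through $\{p_i\}\cup E$ contains, for each of those two vertices, a sphere keeping it outside; interpolating in the pencil gives one keeping both outside, so these ridges are locally Delaunay for \emph{any} $p_i$. There remain the facets of $F_1$ and of $F_2$. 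For an interior facet $\phi\neq H$ of $F_1$, sitting in $p_1*\phi$ and in the unchanged $\mathcal T$-simplex $\tau_\phi$ across $\phi$, looking at the pencil of spheres through $V(\phi)$ and using that $\phi$ was locally Delaunay in $\mathcal T$ shows that $\phi$ stays locally Delaunay as soon as $p_1$ lies strictly \emph{outside} the circumball of $\tau_\phi$; concretely this reads "$p_1\notin\overline{B_j}$" for $j=3,\dots,d$ (the circumball $B_j$ of $F_j$) and "$p_1$ outside the circumball of the outer simplex across $G_1$". Finally $H$ is now bounded by $p_1*H$ and $p_2*H$ with opposite vertices $p_1$ and $p_2$, and one must produce a $(d-2)$-sphere through $V(H)$ with both $p_1$ and $p_2$ outside.

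\medskip\noindent\emph{The coupling at $H$.} Let $B_i$ be the circumball of $F_i$ with monic defining quadratic $\delta_i$ ($\delta_i<0$ inside). Since $V(H)\subset\partial B_1\cap\partial B_2$, the spheres $\partial B_1,\partial B_2$ lie in the pencil through $V(H)$; set $\Sigma_t=\{(1-t)\delta_1+t\delta_2=0\}$. A one-line computation shows that $p_1$ and $p_2$ are simultaneously outside some $\Sigma_t$ precisely when $p_1$ is inside $B_1$ and outside $B_2$, $p_2$ is inside $B_2$ and outside $B_1$, and $\tfrac{|\delta_1(p_1)|}{\delta_2(p_1)}<\tfrac{\delta_1(p_2)}{|\delta_2(p_2)|}$. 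Collecting everything, the conditions to be met are: $p_1\in\relint F_1$ lies outside the circumballs of the $d$ simplices of $\mathcal T$ adjacent to $F_1$ (it is automatically inside $B_1$, as $\relint F_1\subset B_1$); symmetrically for $p_2$; and the ratio inequality. The two single-vertex regions are non-empty — for instance one lets $p_1\to y$ and $p_2\to x$ along suitable directions, checking the finitely many resulting first-order inequalities with the Delaunay property of $\mathcal T$ — and along such a family $|\delta_1(p_1)|\to 0$ while $\delta_2(p_1)$ stays bounded away from $0$ (because $y\in\partial B_1$ but $y$ lies strictly outside $B_2$, $F_2$ being a Delaunay simplex), and symmetrically $\delta_1(p_2)/|\delta_2(p_2)|\to\infty$; hence the ratio inequality holds once $p_1,p_2$ are close enough to $y,x$. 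Then every interior $(d-2)$-face of $\mathcal T'$ is locally Delaunay, so $\mathcal T'$ is the Delaunay triangulation by Lemma~\ref{lemma:localDelaunay}; a last arbitrarily small perturbation of $p_1,p_2$ preserves all the strict inequalities and puts the point set in general position.

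\medskip\noindent The main obstacle is the last paragraph: showing that the two single-vertex avoidance regions are non-empty and, above all, that a choice in them can be made \emph{compatibly} with the ratio inequality at $H$ — i.e.\ that the two subdivisions do not over-constrain each other. This is exactly where "two faces" is essential: subdividing a third face $F_3$ at $c$ would impose, through the three shared facets $F_i\cap F_j$, three such ratio inequalities that are cyclically incompatible, which is the higher-dimensional incarnation of the angle-sum obstruction of Lemma~\ref{lemma:doublestellartriangle}.
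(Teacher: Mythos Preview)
Your reduction is sound: you correctly isolate the interior $(d-2)$-faces that need checking, and the pencil/ratio formulation of the local-Delaunay condition at $H=F_1\cap F_2$ is a reasonable way to encode the coupling between $p_1$ and $p_2$. The gap is in the existence argument of the last paragraph. You assert that one may let $p_1\to y=v_2$ along a direction in $\relint F_1$ while staying outside $\overline{B_3},\dots,\overline{B_d}$; this is exactly what fails. Since $v_2\in F_j$ for every $j\ge3$, the point $v_2$ lies \emph{on} each $\partial B_j$, so the first-order cone you must enter is the intersection of the outward half-spaces of all the $\partial B_j$ at $v_2$, and there is no reason this should meet the tangent cone of $F_1$ at $v_2$. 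A concrete failure in $\R^2$ (so $d=3$): take $v_1=(0,0)$, $v_2=(10,0)$, $v_3=(1,1)$, $c=(1,\tfrac12)$. Then $B_3$ has center $(5,-\tfrac{35}{4})$, the outward normal at $v_2$ is $(5,\tfrac{35}{4})$, and both edge directions of $F_1=\conv\{c,v_2,v_3\}$ at $v_2$, namely $(-9,\tfrac12)$ and $(-9,1)$, have negative inner product with it. Hence an entire neighbourhood of $v_2$ in $\relint F_1$ lies inside $B_3$, and your limit produces no admissible $p_1$. The ``first-order inequalities'' you allude to cannot be satisfied here. Because your ratio argument for $H$ hinges on $|\delta_1(p_1)|\to0$ with $\delta_2(p_1)$ bounded away from~$0$ --- which forces the limit to be $v_2$, the unique vertex of $F_1$ off $\partial B_2$ --- the whole scheme breaks, not just the particular limiting point.

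The paper's proof avoids this by a direct geometric construction rather than a limiting argument: it takes the line $t$ through $c$ tangent to the circumspheres of $F_3,\dots,F_d$ (the intersection of their tangent hyperplanes at $c$) and places $x_1,x_2$ on $t$ on opposite sides of $c$, close to $c$. Tangency gives $x_i\notin\overline{B_j}$ for $j\ge3$ automatically, proximity to $c$ handles all remaining circumspheres in $\mathcal T$, and the coupling at $H$ is dispatched not by a ratio inequality but by the observation that $x_1,c,x_2$ are collinear with $c$ between them, so the circumsphere of $H\cup\{x_2\}$ meets $t$ exactly at $c$ and $x_2$ and therefore misses $x_1$. This sidesteps both the non-emptiness issue and the need to control $\delta_2(p_1)$.
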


\begin{proof}
	Let $F_1,\dots,F_d$ be the $(d-1)$-faces of $\mathcal T$ 
	that contain $c$, and let  $\mathcal R$
	 	be the set of all other $(d-1)$-faces of $\mathcal T$. Let 
	$v_1,\dots,v_d$ be the vertices of $F_1,\dots,F_d$ such that $v_i$ is
	 not contained in $F_i$. 
	The circumspheres of $F_3,\dots,F_d$ contain $c$.
	The intersection of the tangent hyperplanes to these $d-2$ spheres in the point $c$
	contains a line $t$ through $c$ that lies tangent to all those spheres.
	 
	Let $U$ be a small open ball around $c$ that, like $c$, lies outside the 
	circumspheres of all cells in $\mathcal R$.
	Then $U\cap t\setminus \{c\}$ consists of two disjoint, open  
	line segments. Choose two points, $x_1,x_2$, one in each line segment,
	and use them for single stellar subdivisions of $F_1$ resp.~$F_2$
	(cf.~Figure~\ref{Pic:Construct2}).

		\begin{figure}[htb!]
			\centering	
		    \begin{tikzpicture}
			\coordinate (A) at ($.8*(0,0)$);
			\coordinate (B) at ($.8*(7.7,.5)$);
			\coordinate (C) at ($.8*(3.9,6)$);
			\coordinate (Z) at ($.8*(4.1,2.6)$);
			\clip[] (-.5,-.5) rectangle (7,5.2);
			\fill[pink] (A) -- (B) -- (C) -- cycle; 
			\draw[style=thick] (A) -- (B) -- (C) -- cycle; 
			\draw[white,style=fill] (Z) circle (1.1cm); 
			\draw[style=dotted] (Z) circle (1.1cm); 
			\draw[style=thick] (A)--(Z)--(B)--(Z)--(C); 
			\coordinate (AZ) at ($(A)!.5!(Z)$);
			\coordinate (BZ) at ($(B)!.5!(Z)$);
			\coordinate (AZx) at ($(AZ)!1cm!90:(Z)$);
			\coordinate (BZx) at ($(BZ)!1cm!90:(Z)$);
			\coordinate (F) at (intersection of AZ--AZx and BZ--BZx);
			\node (Circ) [draw] at (F) [circle through={(Z)}] {};
			\coordinate (V2) at ($(Z)!.9cm!90:(F)$);
			\coordinate (V1) at ($(Z)!-.9cm!90:(F)$);
			\draw[style=dashed] ($(V1)!-8cm!(V2)$)--($(V1)!8cm!(V2)$);
			\draw[style=fill] (V1) circle (.08cm);
			\draw[style=fill] (V2) circle (.08cm);
			\draw[style=thick] 
			   +(3.2,.5) node [anchor=south east] {$F_3$}
			   +(6.5,-.5) node [anchor=south east] {$C$}
			   +($(V1)!4cm!(V2)$) node [above] {$t$}
			   +(Z) node [below=9pt] {$U$}
			   +(A) node [anchor=south east] {$v_1$}
			   +(B) node [anchor=south west] {$v_2$}
			   +(C) node [anchor=south] {$v_3$}
			   +(V1) node [anchor=south west] {$x_1$}
			   +(V2) node [anchor=south east] {$x_2$}
			   +(Z) node [anchor=north] {$c$};
		    \end{tikzpicture}
				\caption{Choice of the subdivision points.}
				\label{Pic:Construct2}
	\end{figure}
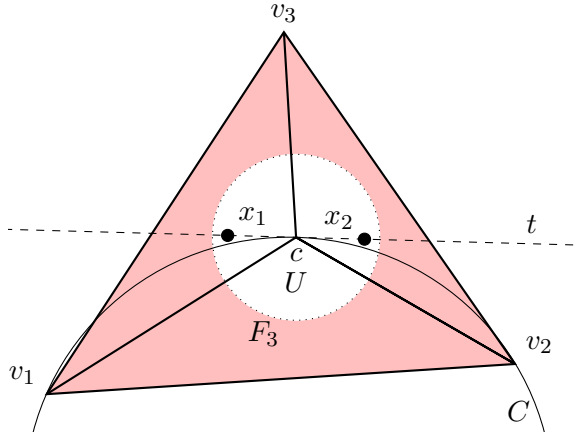

	We claim that the resulting triangulation $\mathcal T'$ is again Delaunay.  
	First we check that $x_1$ and $x_2$ lie inside $F_1$ resp.~$F_2$:
	They lie outside all facets $F_3,\dots,F_d$ but inside 
	$\conv\{v_1,\dots,v_d\}$, hence they lie in ${F_1}\cup{F_2}$. 
	Because $t$ contains $c$, which is a vertex of $F_1$ and $F_2$, only one component 
	of $t\setminus\{c\}$ can be contained in $F_1$ and only one can be contained in $F_2$.
	Hence we can assume that $x_1$ lies in the relative interior of $F_1$ 
	and $x_2$ lie in the relative interior of $F_2$.
	
	Now we need to show that all interior $(d-2)$-faces of $\mathcal T'$ 
	are locally Delaunay.
	The cells in $\mathcal R\cup\{F_3,\dots,F_d\}$ lie in both triangulations
	$\mathcal T$ and in $\mathcal T'$.
	They have empty circumspheres in $\mathcal T$ by assumption, and in $\mathcal T'$ by
	construction.
	
	Let $\mathcal I$ be the faces of $\mathcal T'$ that are not faces of~$\mathcal T$.
	It remains to show that all $(d-2)$-faces in $\mathcal T'$ that are
	contained in two facets of $\mathcal I$ are locally Delaunay.
	The first type lies in $(d-1)$-faces that both contain $x_1$ or both contain $x_2$.
	In this case the locally Delaunay condition is given by Lemma~\ref{lem:stellar}.
	The second type lies in a $(d-1)$-face that contains $x_1$ and
	an other $(d-1)$-face that contains $x_2$. There is only one such $(d-2)$-face, 
	namely the intersection of $F_1$ and $F_2$.  Let's call this face $K$.
	The circumsphere of $\conv(K\cup\{x_2\})$ does not contain $x_1$, because 
	$x_1,c,x_2$ are collinear and $c$ lies between $x_1$ and $x_2$
	(see Figure~\ref{Pic:Construct3}).
	Hence also $K$ is locally Delaunay and thus all interior $(d-2)$-faces of $\mathcal T'$ 
	are locally Delaunay. Hence $\mathcal T'$ is a Delaunay triangulation.
\end{proof}

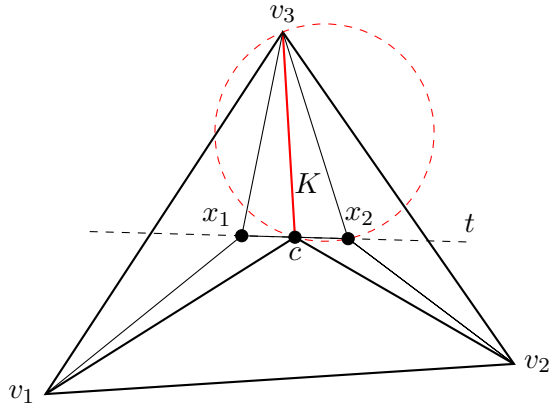
\begin{figure}[htb!]
	\centering
\begin{tikzpicture}
	\coordinate (A) at ($.8*(0,0)$);
	\coordinate (B) at ($.8*(7.7,.5)$);
	\coordinate (C) at ($.8*(3.9,6)$);
	\coordinate (Z) at ($.8*(4.1,2.6)$);
	\draw[style=thick] (A) -- (B) -- (C) -- cycle; 
	\draw[style=thick] (A)--(Z)--(B); 
	\draw[style=thick,red] (Z)--(C); 
	\coordinate (AZ) at ($(A)!.5!(Z)$);
	\coordinate (BZ) at ($(B)!.5!(Z)$);
	\coordinate (AZx) at ($(AZ)!1cm!90:(Z)$);
	\coordinate (BZx) at ($(BZ)!1cm!90:(Z)$);
	\coordinate (F) at (intersection of AZ--AZx and BZ--BZx);
	\coordinate (V2) at ($(Z)!.7cm!90:(F)$);
	\coordinate (V1) at ($(Z)!-.7cm!90:(F)$);
	\coordinate (CZ) at ($(C)!.5!(Z)$);
	\coordinate (V2Z) at ($(V2)!.5!(Z)$);
	\coordinate (CZx) at ($(CZ)!1cm!90:(Z)$);
	\coordinate (V2Zx) at ($(V2Z)!1cm!90:(Z)$);
	\coordinate (M) at (intersection of CZ--CZx and V2Z--V2Zx);
	\node [draw,red,dashed] at (M) [circle through={(Z)}] {};
	\draw[style=dashed] ($(V1)!-2cm!(V2)$)--($(V1)!3cm!(V2)$);
	\draw[] (A)--(V1)--(C)--(V2)--(B)--(V2)--(Z)--(V1);
	\draw[style=fill] (V1) circle (.08cm);
	\draw[style=fill] (V2) circle (.08cm);
	\draw[style=fill] (Z) circle (.08cm);
	\draw[] 
	   +($(V1)!3cm!(V2)$) node [anchor=south] {$t$}
	   +(A) node [anchor=east] {$v_1$}
	   +(B) node [anchor=west] {$v_2$}
	   +(C) node [anchor=south] {$v_3$}
	   +(V1) node [anchor=south east] {$x_1$}
	   +($(V2)+(.15,.3)$) node [] {$x_2$}
	   +(3.45,2.8) node [] {$K$}
	   +(Z) node [anchor=north] {$c$};
      \end{tikzpicture}
	\caption{The circumsphere does not contain $x_1$ because $x_1$, $c$ and $x_2$ are collinear.}
	\label{Pic:Construct3}
\end{figure}

Using this result, we also obtain examples of stacked polytopes that go beyond the 
rather special construction given by Corollary~\ref{cor:LBTtight}. 

\begin{corollary}[inscribable stacked polytopes with bounded vertex degree]\label{prop:BoundDeg}
   For all $d\ge2$ and $n\ge0$ there exists a stacked inscribed polytope of dimension $d$ that has
   $d+1+n$ vertices such that no vertex has degree more than~$2d$.
\end{corollary}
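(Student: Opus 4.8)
The plan is to argue on the Delaunay side and then transport the conclusion through Proposition~\ref{prop:Stereographic_simplicial}. For each $n\ge0$ I will construct a Delaunay triangulation $\mathcal T_n$ of a fixed $(d-1)$-simplex $\sigma$, obtained from $\sigma$ by $n$ single stellar subdivisions, whose dual tree is a \emph{path}, and in which every vertex lies in at most $d$ of the cells that get subdivided along the way. Since $\overline P:=\conv(\sigma)$ is a simplex, it is simplicial and carries no points on its boundary other than its vertices, so Proposition~\ref{prop:Stereographic_simplicial} turns $\mathcal T_n$ into an inscribed stacked $d$-polytope $P$ on $d+1+n$ vertices; the multiplicity bound in $\mathcal T_n$ will translate into the degree bound $2d$ for $P$. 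The case $d=2$ is trivial, since every polygon is inscribable and has all vertex degrees equal to $2\le2d$, so I assume $d\ge3$.

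The construction is greedy with a ``first in, first out'' discard rule. Set $\tau_1:=\sigma$ and fix an order $w_1,\dots,w_d$ of its vertices. Step $1$: subdivide $\sigma$ at an interior point $c_1$ --- this is Delaunay by Lemma~\ref{lem:stellar} --- so that $c_1$ becomes an interior vertex of degree $d$. Step $i\ge2$: form the pool $P_{i-1}:=\tau_{i-1}\cup\{c_{i-1}\}$ of $d+1$ vertices (with $c_{i-1}$ the most recently introduced one), delete from it the oldest vertex $q^\ast$, which is necessarily a vertex of $\tau_{i-1}$, set $\tau_i:=\conv((\tau_{i-1}\setminus\{q^\ast\})\cup\{c_{i-1}\})$, one of the $d$ cells surrounding $c_{i-1}$, and subdivide $\tau_i$ at an interior point $c_i$. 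Then $c_i$ is again an interior vertex of degree $d$, so the recursion continues. Unwinding the rule, $\tau_i$ is always the set of the $d$ most recently introduced vertices among $w_1,\dots,w_d,c_1,\dots,c_{i-1}$, so each vertex belongs to at most $d$ consecutive pools and hence to at most $d$ of the cells $\tau_1,\dots,\tau_n$.

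It then remains to verify Delaunayness and to read off the degrees. For the first point: at step $i\ge2$ the vertex $c_{i-1}$ is interior of degree $d$, so by Proposition~\ref{prop:2possible} we may subdivide \emph{two} of the cells around it --- namely $\tau_i$ and some second cell $\tau_i'$ --- and remain Delaunay; undoing the stellar subdivision at $\tau_i'$ and invoking Corollary~\ref{cor:can_be_undone} shows that subdividing $\tau_i$ alone already yields a Delaunay triangulation, so $\mathcal T_n$ is Delaunay. For the degrees: since $d\ge3$, a single stellar subdivision destroys no edge and creates exactly one new edge at each vertex of the subdivided cell, hence in $\mathcal T_n$ one has $\deg(c_i)=d+\#\{j>i:c_i\in\tau_j\}\le2d$ and $\deg(w)=(d-1)+\#\{i:w\in\tau_i\}\le2d-1$ for a vertex $w$ of $\sigma$. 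Passing to $P$ via Proposition~\ref{prop:Stereographic_simplicial}: the inserted vertices keep their $\mathcal T_n$-degrees, each vertex of $\sigma$ gains precisely the one edge to the north pole $N$ (so its degree in $P$ is at most $2d$), and $N$ has degree $d$ because no stacking is ever performed on a facet through $N$. Thus $P$ is an inscribed stacked $d$-polytope on $d+1+n$ vertices with maximum vertex degree at most $2d$.

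The step I expect to be the main obstacle is the claim that a single stellar subdivision of a cell around an interior degree-$d$ vertex preserves the Delaunay property: Proposition~\ref{prop:2possible} only guarantees this for \emph{pairs} of such subdivisions, and in general a single stellar subdivision of a Delaunay triangulation need not be Delaunay, so the argument has to route through the pair statement together with the ``undo'' Corollary~\ref{cor:can_be_undone}. This is also the reason the dual tree must be built as a path rather than as a bushy binary tree: in a bushy tree an ``old'' vertex would survive down a long root-to-leaf branch and accumulate degree well beyond $2d$. The remaining work --- checking that the first-in-first-out bookkeeping really caps every vertex's multiplicity in $\tau_1,\dots,\tau_n$ at $d$, including the initial steps and the simplex vertices --- is a short direct computation.
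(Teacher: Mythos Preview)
Your proof is correct and follows essentially the same strategy as the paper: build a stacked polytope whose dual tree is a path, always subdividing the cell containing the most recently inserted vertex and discarding the oldest one, so that in the end vertices $i$ and $j$ are adjacent exactly when $|i-j|\le d$. The paper phrases this directly on the polytope side (labelling vertices $1,\dots,d+1+n$ and stacking $d+1+k$ onto the facet $\{k+1,\dots,k+d\}$), whereas you carry it out explicitly on the Delaunay side and then lift via Proposition~\ref{prop:Stereographic_simplicial}; but the combinatorics, the invocation of Proposition~\ref{prop:2possible} at each step, and the resulting degree count are identical. Your detour through Corollary~\ref{cor:can_be_undone} to extract a \emph{single} Delaunay stellar subdivision from the pair produced by Proposition~\ref{prop:2possible} is a point the paper leaves implicit, so your version is in fact slightly more explicit on that score.
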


\begin{proof}
   We may assume $d>2$ (where the inductive steps discussed in the following do not destroy edges).
    
   We start with an arbitrary $d$-simplex, which is inscribed. All its vertices are simple; 
   we label them $1,\dots,d+1$. Now for $k=1,\dots,n$ we refer to Proposition~\ref{prop:2possible}
   in order to stack a new vertex $d+1+k$ onto the facet $\{k+1,k+2,\dots,k+d\}$
   at the simple vertex $d+1+(k-1)=d+k$. 
   This in particular destroys the facet $\{k+1,k+2,\dots,k+d\}$ 
   (which contains the vertex labeled $k+1$, which will not be touched again) and 
   creates the new facet $\{k+2,\dots,k+1+d\}$, adjacent to the new simple vertex $d+1+k$.
   
   In the stacked inscribed polytope created this way, vertices $i$ and $j$ are adjacent exactly if $|i-j| \le d$.
\end{proof}

\section{Three stellar subdivisions are impossible}\label{sec:necessity}

The following establishes the “only if” part of 
Main Theorem \ref{mainthm:Delaunay} (and thus also of
Main Theorem \ref{mainthm:polytopes}): If multiple stellar subdivisions are performed on three facets 
$F_1,F_2$ and $F_3$ at a simple interior vertex of an arbitrary triangulation,
then the resulting triangulation is not a Delaunay triangulation. 

For this, it suffices to consider the complex $\Delta$ that arises
by a single stellar subdivision of a $(d-1)$-simplex $\sigma\subset\R^{d-1}$ using 
an {arbitrary} interior point $c\in\sigma$. This complex $\Delta$
with $(d-1)$-faces $F_1,\dots,F_d$ is Delaunay by Lemma~\ref{lem:stellar}.
Now for $d\ge3$ we apply single stellar subdivisions to the cells $F_1,F_2,F_3$
by arbitrary interior points $r_1,r_2,r_3$. Our claim is that
the resulting triangulation $\mathcal T$ cannot be Delaunay.

In order to prove this claim, we first 
construct a point $x$ that depends only on~$\Delta$.
Its position with respect to $\Delta$ is established in Lemma~\ref{lemma:1}.
Then Lemma~\ref{lemma:2} records the properties of~$x$ with respect to
the subdivision $\mathcal T$. Finally, we establish in Proposition~\ref{prop:3impossible} that $\mathcal T$ cannot be 
Delaunay: For that we use an inversion in a sphere centered at $x$
in order to simplify the situation so that a projection argument reduces
the claim to the case $d=3$, which was established in Lemma~\ref{lemma:doublestellartriangle}.
\medskip

Let $\sigma=\conv\{v_1,\dots,v_d\}$ be a $(d-1)$-simplex in~$\R^{d-1}$,
let $c\in\sigma$ be an interior point, and let $\Delta$ be the 
single stellar subdivision of $\sigma$ by $c$, with $(d-1)$-faces
$F_1,\dots,F_d$, labeled such that $v_i\notin F_i$.

For some $k$ $(1\le k<d)$ let $\mathcal F:=\{F_{k+1},\dots,F_d\}$ and $\mathcal G:=\{F_1,\dots,F_k\}$.
Then $V_{\mathcal  F}:=\{v_{1},\dots,v_k\}$ is the set of vertices of~$\sigma$ that lie in all cells of~$\mathcal F$,
while $V_{\mathcal  G}:=\{v_{k+1},\dots,v_d\}$ is the set of vertices of~$\sigma$ that lie in all cells of~$\mathcal G$.

Now $E_{\mathcal F}:=\aff(V_{\mathcal F}\cup\{c\})$ is an affine subspace of dimension $k$, while
$E_{\mathcal G}:=\aff(V_{\mathcal G}\cup\{c\})$ has dimension~$d-k$. The two spaces together affinely span~$\R^{d-1}$,
so by dimension reasons they intersect in a line~$\ell$. This line intersects the two complementary faces
$\conv(V_{\mathcal F})$ and $\conv(V_{\mathcal G})$ of $\sigma$ in relatively interior points $\bar x$ resp.~$\bar y$. 
  
Let $C_{\mathcal F}$ denote the unique $(k-1)$-sphere that contains $V_{\mathcal F}\cup\{c\}$, that is, the
circumsphere of the $k$-simplex $\conv(V_{\mathcal F}\cup\{c\})$,
which is also the intersection of the circumspheres of $F_{k+1},\dots,F_d$.
The point $c$ lies in the intersection $\ell\cap C_{\mathcal F}$. The line $\ell$ also contains the point
$\ell\cap\conv(V_{\mathcal F})=\{\bar x\}$, which is a relative-interior point of $\conv(V_{\mathcal F})$ and thus
for $k>1$ lies in the interior of the circumspheres of $F_{k+1},\dots,F_d$
and thus in the interior of the sphere $C_{\mathcal F}$ relative to the subspace $E_{\mathcal F}$.
Thus $C_{\mathcal F}\cap\ell=\{c,x\}$, where the second intersection point $x$ is distinct from~$c$, and lies
outside $\sigma$ for $k>1$.

As for $C_{\mathcal F}$ and $x,\bar x$, we define $C_{\mathcal G}$ and $y,\bar y$ for ${\mathcal G}$:
See Figure~\ref{fig:lemma:1}.

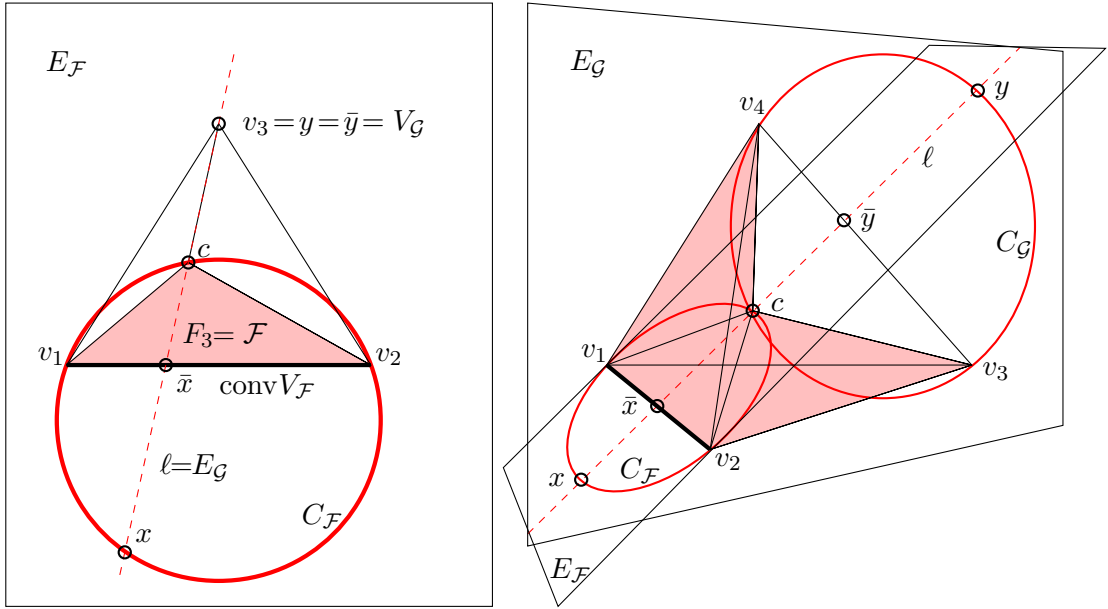
\begin{figure}[htb!]
	\centering
	    \begin{tikzpicture}[scale=.8]
		\draw  (-1,-4) rectangle (7,6);

		\coordinate (A) at (0,0);
		\coordinate (B) at (5,0);
		\coordinate (C) at (2.5,4);
		\coordinate (Z) at (2,1.7);
		\coordinate (r) at (3,2);
		\coordinate (AZ) at ($(A)!.5!(Z)$);
		\coordinate (BZ) at ($(B)!.5!(Z)$);
		\coordinate (CZ) at ($(C)!.5!(Z)$);

		\coordinate (AZx) at ($(AZ)!1cm!90:(Z)$);
		\coordinate (BZx) at ($(BZ)!1cm!90:(Z)$);
		\coordinate (CZx) at ($(CZ)!1cm!90:(Z)$);

		\coordinate (ABZ) at (intersection of AZ--AZx and BZ--BZx);
		\coordinate (BCZ) at (intersection of BZ--BZx and CZ--CZx);
		\coordinate (CAZ) at (intersection of AZ--AZx and CZ--CZx);

		\draw [fill,pink] (A) -- (B) -- (Z) -- cycle;

		\node (Circ) [draw,color=red,ultra thick] at (ABZ) [circle through={(Z)}] {};

		\coordinate (X) at (intersection of C--Z and Circ);
		\coordinate (Xbar) at (intersection of A--B and C--Z);
		\draw[style=thick] 
		   +(3.3,-.35) node {$\conv \!V_{\mathcal F}$}
		   +(4.2,-2.5) node  {$C_{\mathcal F}$}
		   +(2.1,-1.7) node  {$\ell\!\!=\!\!E_{\mathcal G}$}
		   +(-.25,.15) node {$v_1$}
		   +(5.3,.14) node  {$v_2$}
		   +($(C)+(1.9,0)$) node {$v_3\!=\!y\!=\!\bar{y}\!=V_{\mathcal G}$}
		   +(2.6,.5) node  {$F_3\!\!=\!\!\ {\mathcal F}$}
		   +(2.25,1.9) node {$c$}
		   +(0,5) node {$E_{\mathcal F}$}
		   +(X) node [anchor=south west] {$x$} 
		   +(Xbar) node [anchor=north west] {$\bar{x}$};
		\draw (A) -- (B) -- (C) -- cycle;
		\draw[ultra thick] (A) -- (B);
		\draw (A) --(Z)--(B)--(Z)--(C);
		\draw[dashed,color=red] ($(C)!-.5!(Z)$)--($(C)!3.3!(Z)$);
		\draw[style=thick] (X) circle (.1);
		\draw[style=thick] (Xbar) circle (.1);
		\draw[style=thick] (Z) circle (.1);
		\draw[style=thick] (C) circle (.1);
	      \end{tikzpicture}
	    \begin{tikzpicture}[scale=.8]
		\coordinate (A) at (0,0);
		\coordinate (B) at (6,0);
		\coordinate (C) at (2.5,4);
		\coordinate (D) at (1.7,-1.4);
		\coordinate (Z) at (2.4,.9);
		\coordinate (X) at (-.42,-1.9);
		\coordinate (Xbar) at (.83,-.68);
		\coordinate (Ybar) at (3.9,2.4);
		\coordinate (Y) at (6.1,4.55);
		\draw [fill, pink] (A) -- (D) -- (B) -- (Z) -- (C) -- cycle;
		\draw [thick,red,rotate=41] (.42,-1.08) ellipse (2cm and 1.1cm);
		\draw [thick,red] (4.54,2.3)  ellipse (2.5cm and 2.85cm);
		\draw [red,dashed] (-1.3,-2.8) --(6.8,5.27);
		\draw (-1.3,-3) -- (-1.3,6) -- (7.5,5.2) -- (7.5,-1) -- cycle;
		\draw (-.8,-4) -- (-1.7,-1.7) -- (5.3,5.3) -- (8.2,5.25) -- cycle;
		\draw (A) -- (B) -- (C) -- cycle;
		\draw (A) --(D)--(B)--(D)--(C);
		\draw[ultra thick] (A) -- (D);
		\draw (A) --(Z)--(B)--(Z)--(C)--(Z)--(D);
		\draw [thick] (X) circle (.1);
		\draw [thick] (Y) circle (.1);
		\draw [thick] (Ybar) circle (.1);
		\draw [thick] (Xbar) circle (.1);
		\draw [thick] (Z) circle (.1);
		\draw[style=thick] 
		   +($(A)+(-.2,.2)$) node  {$v_1$}
		   +($(B)+(.4,-.1)$) node  {$v_3$}
		   +($(C)+(-.1,.3)$) node  {$v_4$}
		   +($(D)+(.3,-.2)$) node  {$v_2$}
		   +($(Z)+(.4,.1)$) node {$c$}
		   +($(X)+(-.4,0)$) node {$x$}
		   +($(Xbar)+(-.45,0)$) node {$\bar{x}$}
		   +($(Ybar)+(.4,0)$) node {$\bar{y}$}
		   +($(Y)+(.4,0)$)  node {$y$}
		   +(-.3,5) node {$E_{\mathcal G}$}
		   +(.55,-1.75) node {$C_{\mathcal F}$}
		   +(6.7,2) node {$C_{\mathcal G}$}
		   +(-.6,-3.4) node {$E_{\mathcal F}$}
		   +(5,3.8) node [anchor=north west] {$\ell$};
	      \end{tikzpicture}
\caption{The situation of Lemma~\ref{lemma:1}. The left figure illustrates $d=3$, $k=2$, the right one $d=4$, $k=2$.}
\label{fig:lemma:1}
\end{figure}

\begin{lemma}\label{lemma:1}
In the situation just described, the point $x$ lies 
outside the circumspheres of $F_1,\dots,F_k$ and
on the circumspheres of $F_{k+1},\dots,F_d$.
\end{lemma}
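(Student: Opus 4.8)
Write $F_i=\conv(\{c\}\cup\{v_l:l\ne i\})$ and let $S_i$ be its circumsphere, a $(d-2)$-sphere in $\R^{d-1}$. The ``on'' half of the claim is essentially already contained in the construction: by definition $x\in C_{\mathcal F}\cap\ell$, and $C_{\mathcal F}$ is the common intersection of the circumspheres $S_{k+1},\dots,S_d$ (equivalently, each such $S_j$ with $j>k$ contains the $k+1$ affinely independent points $V_{\mathcal F}\cup\{c\}$, so $S_j\cap E_{\mathcal F}$ is the unique $(k-1)$-sphere through them, namely $C_{\mathcal F}$). Hence $x\in S_j$ for every $j>k$, and everything reduces to showing that $x$ lies strictly outside $S_1,\dots,S_k$.

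The plan is to argue entirely on the line $\ell$, via the power of a point. The crucial observation is that \emph{all} the spheres $S_1,\dots,S_k$ meet $\ell$ in the \emph{same} pair of points. Indeed, for $i\le k$ the cell $F_i$ contains $c$ together with all of $V_{\mathcal G}$, so $S_i$ contains the $d-k+1$ affinely independent points $V_{\mathcal G}\cup\{c\}$ that span $E_{\mathcal G}$; therefore $S_i\cap E_{\mathcal G}$ is the unique $(d-k-1)$-sphere through these points, namely $C_{\mathcal G}$, independently of $i$. Since $\ell=E_{\mathcal F}\cap E_{\mathcal G}\subseteq E_{\mathcal G}$, this gives $\ell\cap S_i=\ell\cap C_{\mathcal G}=\{c,y\}$ for every $i\le k$. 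Now, for a point $z$ on $\ell$ the power of $z$ with respect to $S_i$ equals the signed product of the oriented distances from $z$ to $c$ and from $z$ to $y$ along $\ell$, so $z$ lies strictly outside $S_i$ precisely when $z$ is not in the closed segment $[c,y]$. Hence it suffices to prove that $c$ lies strictly between $x$ and $y$ on~$\ell$.

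This last step is one-dimensional bookkeeping. First, writing $c=\sum_{l=1}^{d}\lambda_l v_l$ in barycentric coordinates (all $\lambda_l>0$) and setting $\mu=\sum_{l\le k}\lambda_l\in(0,1)$, one gets $c=\mu\bar x+(1-\mu)\bar y$, so $c$ lies strictly between $\bar x$ and $\bar y$. Second, for $k>1$ the point $\bar x$ is a relative interior point of the inscribed simplex $\conv(V_{\mathcal F})$ and hence lies strictly inside the ball bounded by $C_{\mathcal F}$ in $E_{\mathcal F}$; as $c$ and $x$ are the two points of $C_{\mathcal F}\cap\ell$, this forces $\bar x$ to lie strictly between $c$ and $x$ (for $k=1$ one simply has $x=\bar x=v_1$, and the conclusion holds degenerately). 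Third, by the symmetric statement $\bar y$ lies strictly between $c$ and $y$, or else $y=\bar y=v_d$ when $k=d-1$. Chaining these three betweenness relations arranges the five points along $\ell$ in the order $x,\bar x,c,\bar y,y$, with the possible coincidences $x=\bar x$ and/or $y=\bar y$ occurring only at the two ends; in particular $c$ lies strictly between $x$ and $y$, so $x\notin[c,y]$. Feeding this into the power-of-a-point description yields that the power of $x$ with respect to each $S_i$, $i\le k$, is strictly positive, i.e.\ $x$ lies outside $S_1,\dots,S_k$, as claimed.

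I expect two places to need a little care, both routine. One is the repeated appeal to the fact that a sphere meets an affine subspace in a sphere of exactly one lower dimension once it already contains enough affinely independent points of that subspace; this is a short computation with the defining quadratic equation (the intersection has radius-squared equal to $r^2-\mathrm{dist}(\text{center},E)^2$). The other is keeping the betweenness chain uniform across the extreme values $k=1$ and $k=d-1$, where $\conv(V_{\mathcal F})$ or $\conv(V_{\mathcal G})$ collapses to a single vertex and $x$ or $y$ degenerates onto $\bar x$ or $\bar y$; the ordering argument above is phrased precisely so as to absorb these boundary cases.
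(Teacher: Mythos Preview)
Your argument is correct and follows essentially the same route as the paper's proof: establish that all $S_i$ with $i\le k$ meet $\ell$ in $\{c,y\}$, pin down the order $x,\bar x,c,\bar y,y$ on~$\ell$, and conclude that $x$ lies outside those spheres. The paper simply asserts the ordering and the sphere--line intersections in two sentences, whereas you supply the supporting details (barycentric coordinates for the betweenness of~$c$, the power-of-a-point formulation, the explicit identification $S_i\cap E_{\mathcal G}=C_{\mathcal G}$), but the underlying idea is identical.
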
  

\begin{proof}
	The five points $x,\bar x, c,\bar y,y$ lie in this order along the line $\ell$,
	where the first two points coincide in the case $k=1$, while the last two coincide for~$d-k=1$.
	The circumspheres of $F_1,\dots,F_k$     intersect the line $\ell$ in $\{c,y\}$,
	and thus the point $x$ lies outside these spheres, while 
	the circumspheres of $F_{k+1},\dots,F_d$ intersect the line $\ell$ in $\{c,x\}$.
\end{proof}

\begin{lemma}\label{lemma:2}
If in the above situation the stellar subdivision of some or all of the facets $F_1,\dots,F_k$
results in a Delaunay triangulation $\mathcal T$, then the point $x$ lies outside all of the
circumspheres of the newly created $(d-1)$-faces.
\end{lemma}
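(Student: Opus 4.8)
\emph{Setup and strategy.} I read the hypothesis of Lemma~\ref{lemma:2} as: $\mathcal T$ is obtained from $\Delta$ by a sequence of single stellar subdivisions, each performed on a face lying in $F_1\cup\dots\cup F_k$ (equivalently, none of $F_{k+1},\dots,F_d$ is ever subdivided). The plan is to prove, by induction on the number of single stellar subdivisions used, the following strengthening: \emph{for every triangulation $\mathcal T_i$ occurring along the way, the point $x$ lies on the circumspheres of $F_{k+1},\dots,F_d$ and strictly outside the circumsphere of every other $(d-1)$-face of $\mathcal T_i$.} Then Lemma~\ref{lemma:2} is the case $\mathcal T_i=\mathcal T$. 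Every $\mathcal T_i$ is a Delaunay triangulation by Corollary~\ref{cor:can_be_undone} (undo the subdivisions one at a time from $\mathcal T$), and the base case $\mathcal T_0=\Delta$ is exactly Lemma~\ref{lemma:1}. Throughout I use, for a sphere $S$, the power function $\mathrm{pow}_S$ (squared distance to the center minus the squared radius): it is positive outside $S$ and vanishes on $S$, and for two spheres the difference of the power functions is affine and vanishes exactly on their radical hyperplane.

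\emph{Inductive step.} Suppose $\mathcal T_{i+1}$ arises from $\mathcal T_i$ by subdividing a $(d-1)$-face $G$ at an interior point $p$; the new $(d-1)$-faces are $G_m=\conv(\{p\}\cup K_m)$, with $K_m$ running over the facets of $G$. Since the subdivisions stay inside $F_1\cup\dots\cup F_k$, the face $G$ lies inside a unique $F_{i_0}$ with $i_0\le k$; in particular $G\notin\{F_{k+1},\dots,F_d\}$, so $\mathrm{pow}_{\mathrm{circ}(G)}(x)>0$ by the inductive hypothesis. The affine function $\delta_m:=\mathrm{pow}_{\mathrm{circ}(G_m)}-\mathrm{pow}_{\mathrm{circ}(G)}$ vanishes on $\aff(K_m)$, and since $p$ is interior to $G$ (so $\mathrm{pow}_{\mathrm{circ}(G_m)}(p)=0>\mathrm{pow}_{\mathrm{circ}(G)}(p)$) it is positive on the open halfspace bounded by $\aff(K_m)$ that contains $G$. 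If $x$ lies in the corresponding closed halfspace, then $\mathrm{pow}_{\mathrm{circ}(G_m)}(x)=\mathrm{pow}_{\mathrm{circ}(G)}(x)+\delta_m(x)>0$ and this face is done. Otherwise $x$ lies strictly on the far side of $\aff(K_m)$; I claim (next paragraph) that then $K_m$ cannot lie on $\partial\sigma$, hence $K_m$ is an interior $(d-2)$-face, contained in a second $(d-1)$-face $G'\neq G$ of $\mathcal T_i$ (still a face of $\mathcal T_{i+1}$) sharing $K_m$ with $G_m$. Comparing with $G'$ instead: $\mathrm{pow}_{\mathrm{circ}(G_m)}-\mathrm{pow}_{\mathrm{circ}(G')}$ vanishes on $\aff(K_m)$ and is positive on the far side, because the vertex $w'$ of $G'$ opposite $K_m$ lies there and satisfies $\mathrm{pow}_{\mathrm{circ}(G_m)}(w')>0$ — being a vertex of the Delaunay triangulation $\mathcal T_{i+1}$ not incident to $G_m$, it lies outside the circumsphere of $G_m$ (its unique supporting sphere) by Lemma~\ref{lemma:localDelaunay}. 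As $\mathrm{pow}_{\mathrm{circ}(G')}(x)\ge 0$ by the inductive hypothesis (this is $0$ exactly when $G'\in\{F_{k+1},\dots,F_d\}$), again $\mathrm{pow}_{\mathrm{circ}(G_m)}(x)>0$. The remaining $(d-1)$-faces of $\mathcal T_{i+1}$ coincide with faces of $\mathcal T_i$, so the inductive hypothesis passes to them unchanged; this closes the induction.

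\emph{The geometric crux.} What remains — and this is the step I expect to be the main obstacle — is the claim that, if $x$ lies strictly on the far side of $\aff(K_m)$, then $K_m$ cannot lie on $\partial\sigma$. Were $K_m\subseteq\partial\sigma$, then, $G$ being a face of the subdivision of $F_{i_0}$, the facet $K_m$ would lie inside the facet $\sigma_{i_0}:=\conv(v_l:l\neq i_0)$ of $\sigma$ (the only facet of any descendant of $F_{i_0}$ that can sit on $\partial\sigma$), so $\aff(K_m)=\aff(\sigma_{i_0})$ and the halfspace containing $G$ would be the one containing $v_{i_0}$. It therefore suffices to show that $x$ lies strictly on the $v_{i_0}$-side of $\aff(\sigma_{i_0})$ for every $i_0\le k$, i.e.\ that the barycentric coordinate of $x$ with respect to $\sigma$ at each $v_i$ with $i\le k$ is positive. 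This I would read off from the position of $x$: since $c$ is interior to $\sigma$ it lies in the open segment $(\bar x,\bar y)$, say $c=(1-\mu)\bar x+\mu\bar y$ with $0<\mu<1$; comparing $v_i$-coordinates for $i\le k$ (where $\bar y$ has coordinate $0$) gives $\beta_i=(1-\mu)\lambda_i$, so the coordinate $\lambda_i>0$ of $\bar x$ strictly exceeds the coordinate $\beta_i>0$ of $c$. Moreover $x$ lies on $\ell$ on the far side of $\bar x$ from $c$ and $\bar y$ — the ordering $x,\bar x,c,\bar y,y$ of Lemma~\ref{lemma:1} — so $x=\bar x+r(\bar x-\bar y)$ for some $r\ge 0$, whence the $v_i$-coordinate of $x$ equals $(1+r)\lambda_i>0$. (Incidentally this reproves and sharpens the observation in Lemma~\ref{lemma:1} that $x\notin\sigma$ for $k>1$.)

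\emph{Routine points.} Finally one checks: that the supporting sphere of the $(d-1)$-simplex $G_m$ supplied by Lemma~\ref{lemma:localDelaunay} is its circumsphere; that the apices invoked are genuinely vertices of $\mathcal T_{i+1}$ not lying on $G_m$; and that, because the subdivisions never touch $F_{k+1},\dots,F_d$, these remain faces of every $\mathcal T_i$ with $x$ on their circumspheres and $G$ is always distinct from them. None of this causes trouble.
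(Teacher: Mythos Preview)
Your proof is correct and in fact establishes more than the paper needs: you allow iterated stellar subdivisions inside $F_1\cup\dots\cup F_k$, whereas the paper only treats (and only uses) a single stellar subdivision of each $F_i$. The organization is also genuinely different. The paper subdivides $F_1$ by $r$ and sorts the $d$ new facets into three groups according to which vertex of $F_1$ they miss: in Cases~(I) and~(II) (the facet misses $c$, or some $v_j$ with $j\le k$) the position of $x$ is read off directly from the ordering $x,\bar x,c,\bar y,y$ on the line $\ell$; in Case~(III) ($v_j$ missing with $j>k$) the paper uses adjacency to the unsubdivided $F_j$ together with the Delaunay property --- essentially your Case~2 specialized to $G'=F_j$. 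You replace this three-way split by a uniform power-function comparison across the wall $\aff(K_m)$, plus the barycentric ``crux'' that $x$ has positive $v_i$-coordinate for every $i\le k$; this single fact rules out $K_m\subset\partial\sigma$ on the far side and, in the one-step situation, subsumes the paper's Cases~(I)/(II). The trade-off: the paper's argument is shorter for the exact statement used in Proposition~\ref{prop:3impossible}, while yours is more systematic, makes the role of the Delaunay hypothesis explicit at each step (via Corollary~\ref{cor:can_be_undone} for the intermediates and Lemma~\ref{lemma:localDelaunay} for the neighbor $w'$), and carries over unchanged to deeper subdivision trees.
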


\begin{proof} 
	Without loss of generality, let us assume that $\mathcal T$ is a single stellar subdivision 
	of $\Delta$ at $F_1$ by a new vertex $r$ inside $F_1$. This will result in $d$ new 
	facets $F'_1,\dots,F'_d$, whose vertex set consists of~$r$
	together with all-but-one of the vertices of $F_1$, which are $c,v_2,\dots,v_d$. 
	
    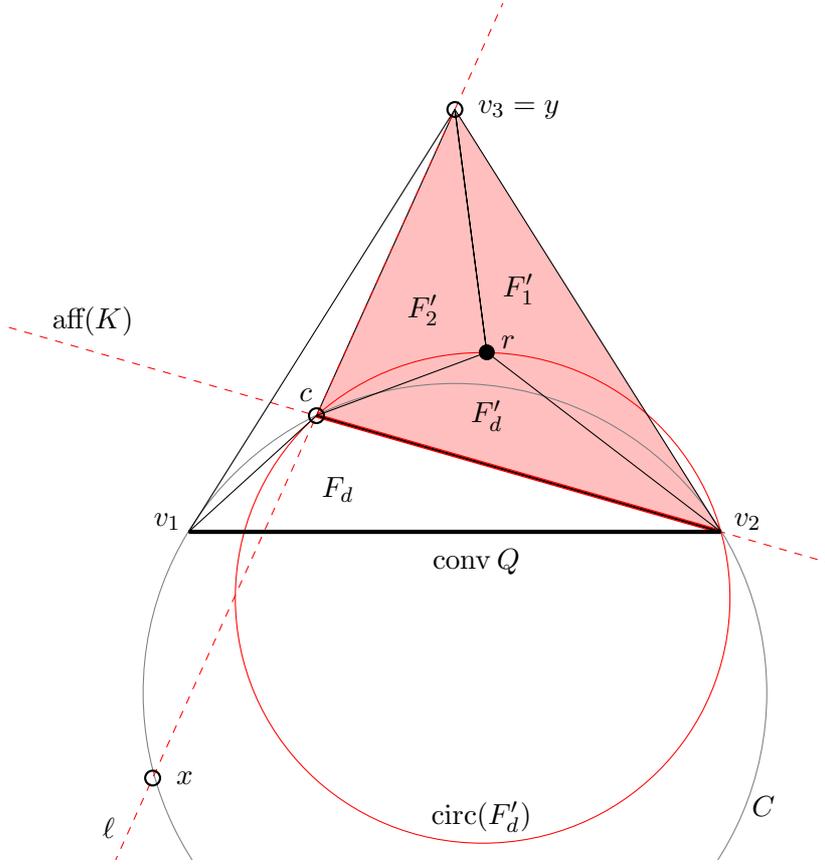
\begin{figure}[htb!]
    	\centering
    	    \begin{tikzpicture}[scale=1.4]
    		\clip [] (-1.7,-3.1) rectangle (6,5);

    		\coordinate (A) at (0,0);
    		\coordinate (B) at (5,0);
    		\coordinate (C) at (2.5,4);
    		\coordinate (Z) at (1.2,1.1);
    		\coordinate (r) at (2.8,1.7);
    		\coordinate (AZ) at ($(A)!.5!(Z)$);
    		\coordinate (BZ) at ($(B)!.5!(Z)$);
    		\coordinate (rZ) at ($(r)!.5!(Z)$);

    		\coordinate (AZx) at ($(AZ)!1cm!90:(Z)$);
    		\coordinate (BZx) at ($(BZ)!1cm!90:(Z)$);
    		\coordinate (rZx) at ($(rZ)!1cm!90:(Z)$);

    		\coordinate (ABZ) at (intersection of AZ--AZx and BZ--BZx);
    		\coordinate (rBZ) at (intersection of rZ--rZx and BZ--BZx);

    		\draw [fill,pink] (Z) -- (B) -- (C) -- cycle;
    		\draw [ultra thick,red] (B) -- (Z);
    		\draw[dashed,color=red] ($(B)!-1!(Z)$)--($(B)!3!(Z)$);

    		\node (Circ) [draw,color=gray] at (ABZ) [circle through={(Z)}] {};
    		\node [draw,color=red] at (rBZ) [circle through={(Z)}] {};

    		\coordinate (X) at (intersection of C--Z and Circ);
    		\coordinate (Xbar) at (intersection of A--B and C--Z);
    		\draw[style=thick] 
    		   +(2.7,-.28) node {$\conv Q$}
    		   +($(A)+(-.2,.1)$) node {$v_1$}
    		   +($(B)+(.25,.1)$) node {$v_2$}
    		   +($(C)+(.6,0)$) node {$v_3=y$}
    		   +($(r)+(.2,.1)$) node {$r$}

    		   +($(r)+(.3,.6)$) node {$F'_1$}
    		   +($(r)+(-.6,.4)$) node {$F'_2$}
    		   +($(r)+(0,-.6)$) node {$F'_d$}

    		   +(1.4,.4) node  {$F_d$}
    		   +(-.9,2) node  {aff$(K)$}
    		   +(5.4,-2.6) node  {$C$}
    		   +(2.75,-2.7) node  {circ($F'_d$)}
    		   +(-.75,-2.8) node  {$\ell$}

    		   +($(Z)+(-.1,.2)$) node {$c$}
    		   +($(X)+(.3,0)$) node {$x$};

    		\draw (A) -- (B) -- (C) -- cycle;
    		\draw[ultra thick] (A) -- (B);
    		\draw (A) --(Z)--(B)--(Z)--(C);
    		\draw[dashed,color=red] ($(C)!-.5!(Z)$)--($(C)!3.3!(Z)$);
    		\draw[style=thick] (X) circle (.07);
    		\draw[style=thick] (Z) circle (.07);
    		\draw[style=thick] (C) circle (.07);
    		\draw[fill] (r) circle (.07);
    		\draw[] (B)--(r)--(C)--(r)--(Z);
    	      \end{tikzpicture}
    \caption{The three cases of Lemma~\ref{lemma:2}, for $d=3$, $k=2$.}
    \label{fig:lemma:2}
    \end{figure}
    
	We discuss them in three different cases (see Figure~\ref{fig:lemma:2}):
	\begin{compactenum}[(I)]
		\item One new facet, say $F'_1$, does not contain $c$.
		Then $c$ lies outside the circumsphere of $F'_1$, while all the 
		vertices in $V_{\mathcal  G}:=\{v_{k+1},\dots,v_d\}$ are vertices
		of $F'_1$, so $\bar y$ lies inside the circumsphere, or on its boundary (in the case $d-k=1$).
		In either case we conclude that $x$ lies outside the circumsphere from the ordering on the
		line $\ell$ described in the proof of Lemma~\ref{lemma:1}.
		\item $k-1$ new facets $F'_2,\dots,F'_k$ do not contain a vertex $v_j$, $2\le j\le k$.
		In this case we argue as in Case~(I). 
		\item $d-k$ new facets $F'_{k+1},\dots,F'_d$ miss a vertex $v_j$, $(k+1\le j\le d)$ from~$V_{\mathcal  G}$. 
		Then $F'_j$ is adjacent to the facet $F_j$ because both share the 
		$(d-2)$-face $K:=\conv(\{c,v_2,\dots,v_d\}{\setminus}v_j)$. Their circumspheres intersect in $\aff(K)$.
		The line $\ell$ intersects $\aff(K)$ in $c$, hence $x$ and $\bar x$ lie on the same side of $\aff(K)$,
		as well as $v_1$, because $\bar x$ is a convex combination of $v_1$ and $\aff(K)$.
		So, the circumsphere of $F_j$ passes through $x$ and $v_1$ on the same side of $\aff(K)$.
		Because $\mathcal T$ is Delaunay, the circumsphere of $F'_j$ does not contain $v_1$ and hence 
		also not $x$.\vskip-9.5mm\mbox{}
	\end{compactenum}
\end{proof} 

\begin{proposition}\label{prop:3impossible} 
		Let $\Delta$ be a single stellar subdivision of a $(d-1)$-simplex 
		$\sigma=\conv\{v_1,\dots,v_d\}$ in $\R^{d-1}$ by an interior point $c\in\sigma$,
		so the facets of $\Delta$ are $F_i=\conv(\{c,v_1,\dots,v_d\}{\setminus}v_j)$.
		
		Let $\mathcal T$ arise from this Delaunay triangulation $\Delta$ by single 
		stellar subdivisions of $F_1,\dots,F_k$ by interior points $r_i\in F_i$ $(1\le i\le k)$.
		
		If $\mathcal T$ is a Delaunay triangulation, then $k<3$.
\end{proposition}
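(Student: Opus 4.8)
The plan is to lower the dimension down to the planar configuration of Lemma~\ref{lemma:doublestellartriangle} in two moves: first shrink the number of subdivided facets with Corollary~\ref{cor:can_be_undone}, then use the inversion set up just before Lemma~\ref{lemma:1} together with a linear projection. \emph{Reductions.} If $k\le2$ there is nothing to prove. If $d=3$, then $k\le d=3$ and the only case to exclude is $k=3$: there $\sigma=\conv\{v_1,v_2,v_3\}$, $\Delta$ is the stellar subdivision of the triangle $\sigma$ at~$c$, and $\mathcal T$ subdivides each of the three resulting triangles at $r_1,r_2,r_3$ — exactly the configuration of Lemma~\ref{lemma:doublestellartriangle}, which is not Delaunay. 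So assume $d\ge4$ and, for contradiction, that $\mathcal T$ is Delaunay with $k\ge3$. Undoing the stellar subdivisions at $F_4,\dots,F_k$ one at a time, each step preserving the Delaunay property by Corollary~\ref{cor:can_be_undone}, we reduce to a Delaunay triangulation of the required form with $k=3$; so from now on $k=3$, hence $\mathcal G=\{F_1,F_2,F_3\}$, $\mathcal F=\{F_4,\dots,F_d\}$, $V_{\mathcal F}=\{v_1,v_2,v_3\}$, $V_{\mathcal G}=\{v_4,\dots,v_d\}$, and $E_{\mathcal G}=\aff\{c,v_4,\dots,v_d\}$ has dimension $d-3$.

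\emph{The inversion.} Run the construction preceding Lemma~\ref{lemma:1} for this $k=3$ to obtain the point $x$ on the line~$\ell$. By Lemma~\ref{lemma:1}, $x$ lies on $\operatorname{circ}(F_4),\dots,\operatorname{circ}(F_d)$ and outside $\operatorname{circ}(F_1),\operatorname{circ}(F_2),\operatorname{circ}(F_3)$; by Lemma~\ref{lemma:2}, since $\mathcal T$ is Delaunay, $x$ also lies outside the circumsphere of every cell of $\mathcal T$ created by subdividing $F_1,F_2,F_3$. Apply the inversion $\iota$ in a sphere centered at~$x$. Because $x\in C_{\mathcal F}=\bigcap_{j\ge4}\operatorname{circ}(F_j)$, every $\operatorname{circ}(F_j)$ with $j\ge4$ is mapped to a hyperplane $H_j$, and since $\operatorname{circ}(F_j)\supseteq C_{\mathcal F}$ all these hyperplanes contain the common $2$-plane $P:=\iota(C_{\mathcal F})$, which also contains $\iota(c),\iota(v_1),\iota(v_2),\iota(v_3)$. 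The affine subspace $E_{\mathcal G}$ contains~$x$, so $\iota(E_{\mathcal G})=E_{\mathcal G}$, and in particular $\iota(c),\iota(v_4),\dots,\iota(v_d)$ all lie in $E_{\mathcal G}$. All remaining relevant spheres — the circumspheres of $F_1,F_2,F_3$ and of the new cells — avoid~$x$ and hence map to genuine spheres; as $x=\iota^{-1}(\infty)$ lies outside each of them, their empty sides are mapped to the sides containing~$\infty$, so the empty-circumsphere conditions witnessing the assumed Delaunay property of $\mathcal T$ survive for $\iota(\mathcal T)$.

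\emph{The projection.} Now project $\R^{d-1}$ along the direction space of $E_{\mathcal G}$ onto a complementary $2$-plane. The points $\iota(c),\iota(v_4),\dots,\iota(v_d)$ lie in $E_{\mathcal G}$ and therefore collapse to a single point~$O$, and the $(d-1)$-faces $\iota(F_4),\dots,\iota(F_d)$ collapse accordingly, playing no further role; the cells $\iota(F_1),\iota(F_2),\iota(F_3)$, each subdivided by the image of $\iota(r_i)$, project to the three triangles $O\bar v_2\bar v_3$, $O\bar v_1\bar v_3$, $O\bar v_1\bar v_2$ at the interior point~$O$ of the image triangle $\conv\{\bar v_1,\bar v_2,\bar v_3\}$, each further subdivided at the image of $r_i$. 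Thus the relevant part of the projected picture is, combinatorially, the triangle $\bar v_1\bar v_2\bar v_3$ stellarly subdivided at an interior point and then the three pieces stellarly subdivided — precisely the configuration of Lemma~\ref{lemma:doublestellartriangle}. If $\mathcal T$ were Delaunay, this planar triangulation would inherit the empty-circumcircle property, i.e. it would be a Delaunay triangulation, contradicting Lemma~\ref{lemma:doublestellartriangle}. Hence $k<3$.

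\emph{Main obstacle.} The delicate step is the last one: showing that an empty circumsphere in $\R^{d-1}$ for $\iota(\mathcal T)$ really does descend to an empty circumcircle in the target $2$-plane for the projected triangulation. For an arbitrary sphere and an arbitrary projection this is false, and the argument must exploit that the inversion — together with the choice of $x$ lying on $\operatorname{circ}(F_j)$ for all $j\ge4$ and outside every other circumsphere (Lemmas~\ref{lemma:1} and~\ref{lemma:2}) — has placed the cells meeting $P$ in a position that is ``cylindrical'' over the target plane along the $E_{\mathcal G}$-direction, so that the fibres through the projected vertices miss the relevant circumspheres. Checking this compatibility carefully, so that it suffices to test the locally Delaunay condition on the edges $O\bar v_i$ and these lift to genuine interior $(d-2)$-faces of $\iota(\mathcal T)$, is the heart of the proof.
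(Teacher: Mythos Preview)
Your outline --- reduce to $k=3$, invert at the point $x$ supplied by Lemmas~\ref{lemma:1} and~\ref{lemma:2}, project to a $2$-plane, and invoke Lemma~\ref{lemma:doublestellartriangle} --- is exactly the paper's strategy. But you have not carried out the step you yourself call ``the heart of the proof'': you assert that the planar picture ``would inherit the empty-circumcircle property'' and then immediately concede that this is what needs checking. As it stands the argument is an outline, not a proof.

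Two points show where your version diverges from a working argument. First, you aim for too much: you claim the projected triangulation is Delaunay. The paper does \emph{not} prove this and does not need to. It uses the sharp form of Lemma~\ref{lemma:doublestellartriangle}: at least one of the three edges $c'v_1'$, $c'v_2'$, $c'v_3'$ must fail to be locally Delaunay. So it suffices to produce, for each of these three edges, a single supporting circle in the plane. Second, your projection along the direction of $E_{\mathcal G}$ gives no leverage for this, because the preimage of a disk under that projection is an infinite cylinder, and there is no reason an empty supporting sphere should contain such a cylinder. The paper instead projects via the barycentric (affine) map onto the $2$-plane $K$ in which $c',v_1',v_2',v_3'$ already lie after inversion, so that the preimage of a disk is a bounded cone over that disk with apex set $\{v_4',\dots,v_d'\}$. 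Now for the $(d-2)$-face $\conv\{c',v_3',v_4',\dots,v_d'\}$ shared by $F_1'$ and $F_2'$ one takes a supporting sphere $C$ (Delaunay Lemma); since $v_4',\dots,v_d'$ lie \emph{on} $C$, the cone $\conv\bigl((C\cap K)\cup\{v_4',\dots,v_d'\}\bigr)$ sits inside the closed ball bounded by $C$, so any point outside $C$ projects outside the circle $C\cap K$. This is the missing geometric idea, and it is precisely what makes the descent work for the three relevant edges.
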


\begin{proof}
	For $d=3$ this was established in Lemma~\ref{lemma:doublestellartriangle}, so we assume 
	$d>3$. As a single stellar subdivision can be undone without destroying the Delaunay property 
	(Corollary~\ref{cor:can_be_undone}),
	it is enough to show that $\mathcal T$ cannot be a Delaunay triangulation if $k=3$.
	
	For the sake of contradiction we assume that such a $\mathcal T$ is a Delaunay triangulation.
	Then we are in the situation discussed above, where we find the point~$x$ on the line $\ell$,
	which by Lemmas~\ref{lemma:1} and~\ref{lemma:2}
	lies on the circumspheres of the facets $F_4,\dots,F_d$, 
	but outside the circumspheres of all other facets of~$\mathcal T$. 
	Let $\mathcal R$ denote the set of these other facets.
	The inversion of $\R^{d-1}$ in the unit sphere centered at $x$ sends all the vertices 
	of~$\mathcal T$ to new points in~$\R^{d-1}$.
	This inversion induces a simplicial map to a new triangulation~$\mathcal T'$.
	\[
	 \Psi:\mathcal R\ \ \longrightarrow\ \ \mathcal T'.
	\]
	As an abbreviation, we denote the images of $\Psi$ by a prime $()'$; for example, $\Psi(v_1)=v'_1$.
	Note that if we apply this to the images of simplices $\sigma,F_1,\dots,F_3$, then
	we refer to the simplices obtained by applying $\Psi$ to the vertices.

	The simplicial complex $\mathcal T'$ is a part of the unique Delaunay subdivision of its vertex set,
	because for all cells in $\mathcal R$, an empty circumsphere is mapped to an empty circumsphere.
	This in particular shows that $\mathcal T'$ is a simplicial complex.
	Let $r'_1,r'_2,r'_3$ be the three images of the vertices that where used to perform single
	stellar subdivisions to $F_1,F_2,F_3$.
	Then these vertices are also interior vertices of $\mathcal T'$ and hence $\mathcal T'$
	is the result of single stellar subdivisions of $F'_1,F'_2$ and $F'_3$ by $r'_1,r'_2,r'_3$. 
	The inversion centered at $x$ also implies that $c',v'_1,v'_2$ and $v'_3$
	lie in a commen $2$-plane, as their preimages lie on a $2$-sphere that passes through $x$.
	Note that no three of these four vertices lie in line. By checking
	some vertex incidences, we figure out that the structure of $\mathcal T'$ can be described as follows:
	Take a $(d-1)$-simplex, split it into three simplices by inserting a vertex $c'$
	in the interior of a $2$-face and then apply single stellar subdivisions to each of those three simplices
	by points $r'_1,r'_2$ and $r'_3$. In particular, the support of $\mathcal T'$ is convex, 
	so $T$ is the Delaunay triangulation of the set $\{c',r'_1,r'_2,r'_3,v'_1,\dots,v'_d\}$
	of $d+4$ points in $\R^{d-1}$. (See the left part of Figure~\ref{fig:Rd}.)

\begin{figure}[htb!]
	\centering
	    \begin{tikzpicture}[scale=.8,rounded corners=.66pt]
		\coordinate (A) at (0,0);
		\coordinate (B) at (6,0);
		\coordinate (C) at (2.5,4);
		\coordinate (D) at (1.5,-1.4);
		\coordinate (Z) at (2.7,-.5);
		\coordinate (r1) at (1.9,1.6);
		\coordinate (r2) at (2.75,1.7);
		\coordinate (r3) at (3.2,1.6);
		\draw [fill,pink] ($(Z)+(-.12,+.2)$) --(C)--(B)--cycle;
		\draw [red,dashed] (4.3,1.85) circle(2.76);
		\draw [red,dotted] (4.3,1.85) ellipse(2.76 and 1.25);
		\draw [fill, pink,rotate=0] ($(Z)!.5!(B)$) ellipse (1.76cm and .8cm);
		\draw (-2.3,-2) -- (-.5,1) -- (6.5,1) -- (8.3,-2) -- cycle;
		\draw [ultra thick] (A) -- (D) -- (B) -- cycle;
		\draw [ultra thick] (A) --(Z)--(D);
		\draw [draw=red,rotate=0] ($(Z)!.5!(B)$) ellipse (1.76cm and .8cm);
		\draw (C) -- (r2) -- (B) -- (r2) -- (A) -- (r2) -- (Z);
		\draw [ultra thick,red] (Z) --(C)--(B)--cycle;
		\draw [ultra thick] (C) -- (A) -- (C) -- (D);
		\draw (C) -- (r1) -- (A) -- (r1) -- (D) -- (r1) -- (Z);
		\draw (C) -- (r3) -- (D) -- (r3) -- (B) -- (r3) -- (Z);
		\draw[fill=white] (r1) circle (.1);
		\draw[fill=pink] (r2) circle (.1);
		\draw[fill=white] (r3) circle (.1);
		\draw[style=thick] 
		   +($(A)+(-.3,0)$) node  {$v'_1$}
		   +($(D)+(-.4,-.1)$) node  {$v'_2$}
		   +($(B)+(.2,.3)$) node  {$v'_3$}
		   +($(C)+(0,.3)$) node  {$v'_4$}
		   +($(r1)+(-.3,.2)$) node {$r'_3$}
		   +($(r2)+(-.37,.2)$) node {$r'_2$}
		   +($(r3)+(.37,.2)$) node {$r'_1$}
		   +(7.5,-1.8) node [anchor=south] {$K$}
		   +(6.1,-1.2) node [anchor=south] {$C'$}
		   +(7.4,1.5) node [anchor=south] {$C$}
		   +($(Z)+(.1,-.2)$) node {$c'$};
	      \end{tikzpicture}
	    \begin{tikzpicture}
		\coordinate (A) at ($.7*(0,0)$);
		\coordinate (B) at ($.7*(8,0)$);
		\coordinate (C) at ($.7*(4.1,6)$);
		\coordinate (a) at ($.7*(5.2,2.1)$);
		\coordinate (b) at ($.7*(3,2.1)$);
		\coordinate (c) at ($.7*(4,1)$);
		\coordinate (Z) at ($.7*(4.1,2)$);
		\draw[style=thick] 
		   +($.7*(6.2,5.3)$) node {$C'$}
		   +(A) node [anchor=east] {$v'_1$}
		   +(B) node [anchor=west] {$v'_2$}
		   +(C) node [anchor=south] {$v'_3$}
		   +($(a)+(.55,-.2)$) node {$\pi(r'_1)$}
		   +($(b)+(-.55,-.2)$) node {$\pi(r'_2)$}
		   +($(c)+(.1,-.4)$) node {$\pi(r'_3)$}
		   +($(Z)+(-.2,.3)$) node {$c'$};
		\draw[style=thick] (A) -- (B) -- (C) -- cycle;
		\draw[style=thick] (A)--(Z)--(B);
		\draw[ultra thick,color=red] (Z)--(C);
		\draw[dotted] (A)--(c)--(B)--(a)--(C)--(b)-- cycle;
		\draw[dotted] (a)--(Z)--(b)--(Z)--(c);
		\node[draw,color=red]  at ($(C)!.5!(Z)$) [circle through={(Z)}] {};
		\draw (a)[fill] circle (.1);
		\draw (b)[fill] circle (.1);
		\draw (c)[fill] circle (.1);  
	      \end{tikzpicture}
  \caption{Left: An example for $d=4$. Right: The projection image in the $2$-plane $K$.} 
\label{fig:Rd}  
\end{figure}
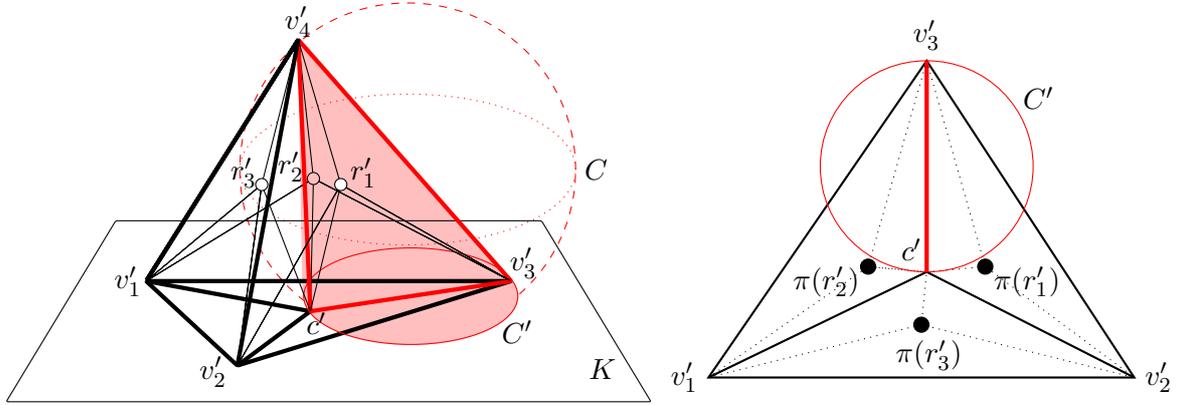  

	Let $K$ be the $2$-plane containing $c',v'_1,v'_2$ and $v'_3$ and let $\mathcal T'_K$ denote the 
	subcomplex of~$\mathcal T'$ that lies in $K$. We define barycentric coordinates by the points $v'_1,\dots,v'_d$
	and let $\pi$ be the corresponding coordinate projection
	\[
	\pi:\relint (\mathcal T')\rightarrow \relint (\mathcal T'_K)=\relint (\conv\{v_1,\dots,v_3\}).
	\]
	As $F_1$ and $F_2$ share a $(d-2)$-face, we know that $F'_1$ and $F'_2$
	also share the same $(d-2)$-face in~$\mathcal T'$.
	It has vertex set $\{c',v'_3,\dots,v'_d\}$ and it must have a supporting sphere.
	We pick one and call it~$C$. The intersection of~$C$ with~$K$ is a supporting sphere
	for the edge $(c',v'_3)$ in $\mathcal T'_K$. We call this $1$-sphere $C'$ and notice that
	the preimage $\conv(C')$ under 	$\pi$, which is contained in $\conv\! \left(C'\cup\{v'_4,\dots,v'_d\}\right)$, 
	lies completely inside $C$. 

	(This crucial fact is illustrated in red in the right part of Figure~\ref{fig:Rd}: The sphere that 
		contains $C'$ as well as $v'_4$ must enclose the whole truncated cone.)
	
	This implies that the images of $r'_1,r'_2$ and $r'_3$ under the projection~$\pi$
	lie outside $C'$, but in the interior of $\mathcal T'_K$. From this we derive that we 
	can apply single stellar subdivisions to the three $2$-faces of $\mathcal T'_K$ 
	by the vertices $\pi(r'_1),\pi(r'_2)$ and $\pi(r'_3)$ such that the edges $(c',v'_1),(c',v'_2)$ and $(c',v'_3)$ 
	would still be locally Delaunay, as indicated in the right part of Figure~\ref{fig:Rd}.
	However, in Lemma~\ref{lemma:doublestellartriangle} we have already proved that this is impossible.
\end{proof}

\begin{small}

\end{small}

\end{document}